\newtheorem{theorem}{Theorem}
\newtheorem{proposition}[theorem]{Proposition}
\newtheorem{remark}[theorem]{Remark}
\def\be#1{\begin{equation}\label{#1}}
\def\ee{\end{equation}}
\def\req#1{{\rm(\ref{#1})}}
\def\xdag{x^\dagger}
\def\xad{{x_\alpha^\delta}}
\def\ydel{y^\delta}
\def\norm#1{\hspace*{0.2ex}\|#1\|}
\def\R{\mathbb{R}}
\title[Lavrentiev's regularization method]{Lavrentiev's regularization method in Hilbert spaces revisited}
\author[Bernd Hofmann and Barbara Kaltenbacher and Elena Resmerita]{}
\subjclass{Primary:  65J22,	65J22 ; Secondary: 45Q05, 35R30.}
 \keywords{Lavrentiev regularization, monotone operators, convergence analysis, source conditions, regularization parameter choice}
 \email{hofmannb@mathematik.tu-chemnitz.de}
 \email{barbara.kaltenbacher@aau.at}
 \email{elena.resmerita@aau.at}
\thanks{The first author was supported by DFG grant HO~1454/8-2.
The second author acknowledges support by the Austrian Science Fund FWF under grand I2271.
The second and third author were supported by the Karl Popper Kolleg ``Modeling-Simulation-Optimization'' of the AAU}
\begin{document}
\maketitle

\centerline{\scshape Bernd Hofmann}
\medskip
{\footnotesize
 \centerline{Technische Universit\"at Chemnitz}
   \centerline{Reichenhainer Str. 41}
   \centerline{09111 Chemnitz, Germany}
} 

\medskip

\centerline{\scshape Barbara Kaltenbacher and Elena Resmerita}
\medskip
{\footnotesize
 \centerline{Alpen-Adria-Universit\"at Klagenfurt}
   \centerline{Universit\"aatsstra\ss e 65-67}
   \centerline{9020 Klagenfurt, Austria}
}

\bigskip

 \centerline{(Communicated by the associate editor name)}


\begin{abstract}
In this paper, we deal with nonlinear ill-posed problems involving monotone operators and consider Lavrentiev's regularization method. This approach, in contrast to Tikhonov's regularization method,  does not make use of the adjoint of the derivative. There are plenty of qualitative and quantitative convergence results in the literature, both in Hilbert and Banach spaces. Our aim here is mainly to contribute to convergence rates results
 in Hilbert spaces based on some types of error estimates derived under various source conditions and to interpret them in some settings. In particular, we propose and investigate new variational source conditions adapted to these Lavrentiev-type techniques. Another focus of this paper is to exploit the concept of approximate source conditions.
\end{abstract}
\section{Introduction} \label{sec:intro}
For an infinite dimensional and separable real Hilbert space $X$ with norm $\|\cdot\|$ and inner product $\langle\cdot,\cdot\rangle$, we consider the ill-posed operator equation
\be{Fxy}
F(x)=y
\ee
with the nonlinear forward operator $F: \mathcal{D}(F) \subseteq X \to X$ and assume that only noisy data $\ydel \in X$ are available such that the deterministic noise model
\be{delta}
\norm{y-\ydel}\leq\delta
\ee
with noise level $\delta>0$ applies.  In this context, let $\xdag \in \mathcal{D}(F)$ denote an exact solution to equation (\ref{Fxy}) and $\bar{x} \in X$ a reference element (initial guess).
Throughout this work we suppose that $F$ is a monotone operator, i.e. we have
\be{Fmon}
\langle F(x)-F(\tilde{x}),x-\tilde{x}\rangle \geq 0 \qquad \mbox{for all} \quad x,\tilde{x}\in \mathcal{D}(F).
\ee

{Then the operator equation \req{Fxy} is well-posed if
$$\langle F(x)-F(\tilde{x}),x-\tilde{x}\rangle \geq \|x-\tilde x\|\,\theta(\|x-\tilde x\|) \quad \mbox{for all}\;\; x,\tilde{x}\in \mathcal{D}(F)$$
holds for some index function $\theta$. A function $\theta: (0,\infty) \to (0,\infty)$ is called index function if it is continuous, strictly
increasing and moreover satisfies the limit condition $\lim_{t \to +0} \theta(t)=0$.
}

If well-posedness of \req{Fxy} fails, a regularization approach is required in order to find stable approximate solutions to the ill-posed equation \req{Fxy}. We are going to construct regularized solutions
$\xad$ to $\xdag$ by solving the equation
\be{Lavrentiev}
F(\xad)+\alpha(\xad-\bar{x})=\ydel.
\ee
This approach, which was originally suggested in \cite{Lavrentiev67}, is often called Lavrentiev's regularization method. Alternatively, it is called perturbation method (cf.~\cite{LiuNash96}) in order
to refer to the manner of obtaining regularized solutions as singular perturbations.
It can be shown that  equation \eqref{Lavrentiev} has a unique solution $\xad$, under various types of assumptions. We refer below to two variants of such assumptions, namely (i) in the context of a globally monotone operator $F$ defined on the whole space $X$  and (ii) when the domain of $F$ might be smaller than $X$ and $F$ is locally monotone there.

(i) If $F: X \to X$ is globally monotone,  i.e.~\req{Fmon} is valid with $\mathcal{D}(F) =X$, and $F$ is a hemicontinuous operator then, by the Browder-Minty theorem, \req{Lavrentiev} has a unique solution $\xad$ for all $\bar{x} \in X$ and $\ydel \in X$.

(ii) If there is a ball $B_r(\xdag) \subset \mathcal{D}(F)$ around a solution $\xdag$ to equation \req{Fxy} with radius $r=\|\xdag-\bar x\|+\frac{\delta}{\alpha}$ and $F:\mathcal{D}(F) \subset X \to X$ is a Fr\'echet differentiable operator monotone and hence
hemicontinuous operator
in the ball, then \req{Lavrentiev} has a unique solution $\xad$ in $B_r(\xdag)$ for all $\bar{x} \in X$ and $\ydel \in X$ (cf.~\cite[Theorem~1.1]{Taut02}).

In both cases, $\xad$ depends continuously on $\ydel$.

{Note that we will work under  assumptions (i), for simplicity of exposition.}
A comprehensive study of Lavrentiev's regularization method for nonlinear equations in Hilbert spaces with monotone operators, even in a more general setting, can be found in the book \cite{AlbRya06}, for modifications of the method see also \cite{BakuSmi06,BakuSmi07,George13,Janno00}.
Nevertheless, for completeness of exposition and since some of the estimates will be needed later on, in Section~\ref{sec:conv} we provide a short summary of the arguments leading to convergence of $\xad$ from (\ref{Lavrentiev}) to $\xdag$  (see also \cite{Ramm00,BaGo94}).

As in the case of Tikhonov's regularization method (cf., e.g.,~\cite[Section~3.2]{SKHK12} and \cite{Hof15}), also for Lavrentiev's regularization method
a certain additional smoothness of $\xdag$ with respect to the forward operator is required in order to derive convergence rates.
At the origin of such studies, source conditions (range conditions) of H\"older-type (cf.,~e.g.,~\cite{Janno00,LiuNash96,Taut02, Taut04}) were under consideration, however attaining   here the specific form
\be{LavHoelder}
\xdag-\bar x \in \mathcal{R}(A^p), \quad 0<p \le 1.
\ee
Extensions of type
\be{Lavindex}
\xdag-\bar x \in \mathcal{R}(\varphi(A)),
\ee
with index functions $\varphi$, can also be found in the literature (cf.,~e.g., \cite{MaNa13} and \cite{Argyros13,Seme10}).
In this context, $A\in\mathcal{L}(X,X)$ denotes a bounded linear operator related to some  derivative $F^\prime(\xdag)$ of the forward operator at the solution point.
If these conditions exceed the present solution smoothness, some weaker forms of source conditions might be of interest.

Recent developments  regarding convergence rates theory involve variational inequalities as source conditions for Tikhonov regularization, as introduced by \cite{HKPS07}. One focus of this work is to investigate in case of Lavrentiev regularization not only
the already known   conditions in a Tikhonov-type context, but also new conditions adapted to the Lavrentiev's techniques  - see  Section ~\ref{sec:rates_var} in this respect. Another focus is to exploit the concept of approximate source conditions in the sense of \cite{HeinHof09,Hof06} in Section~\ref{sec:rates_approx}.
Finally,  we provide some illustrative examples in Section \ref{sec:examples}.

\section{Convergence} \label{sec:conv}

Similarly to the Tikhonov regularization for nonlinear operator equations in a Hilbert space setting (cf.~\cite[Sect.~10.2]{EHN96}, also for Lavrentiev regularized solutions $\xad$ convergence is global, in the sense that no sufficient closeness between $\bar{x}$ and $\xdag$
needs to be required. Also a general  \textcolor{blue}{condition} which is well-known from Tikhonov regularization occurs again here, namely we assume that $F$ is
weak-to-norm sequentially closed,
 which means that
\be{Fweakseqclosed}
x_n\rightharpoonup \tilde x\mbox{ and } F(x_n)
\to
z_0 \ \Rightarrow \tilde x\in\mathcal{D}(F) \mbox{ and } F(\tilde x)=z_0.
\ee
Monotonicity and hemicontinuity of $F$ are together sufficient for \eqref{Fweakseqclosed} in the case $\mathcal{D}(F)=X$ (cf., e.g., Lemma 1.4.5 and Theorem 1.4.6 in \cite{AlbRya06}).

We assume that the set $L$ of solutions to equation \req{Fxy} for given $y \in X$ is not empty. We  derive below formulas for handling the error of regularized solutions $\xad$ with respect to exact solutions $\xdag \in L$ in order to show convergence for appropriate choices of the regularization parameter $\alpha$, where in particular $\bar{x}$-minimum-norm solutions are targeted. As usual, we say that a solution $\xdag_{min} \in L$ is the $\bar{x}$-minimum-norm solution if
$\|\xdag_{min}-\bar{x}\|=\min\limits_{\xdag \in L}\|\xdag-\bar{x}\|$.

Under {condition} \req{Fweakseqclosed}, the set $L$ is weakly closed and hence such a minimum-norm solution $\xdag_{min}$ exists (see, e.g.,~\cite[Prop.~3.14]{SKHK12}). If in addition $L$ is convex and closed,  then $\xdag_{min}$ is uniquely determined. { This is actually the case throughout this paper, as the context (i) that we work with, namely $\mathcal{D}(F)=X$,  monotonicity and hemicontinuity of the operator, implies maximal monotonicity of $F$ and is sufficient  for the convexity and closedness of $L$ (cf. Th. 1.4.6 and Corollary 1.4.10 in \cite{AlbRya06}).}

Testing \eqref{Lavrentiev} with $\xad-\xdag$ and moreover with $F(\xad)-F(\xdag)$ leads to
\begin{eqnarray} \label{bas4}
&&\langle F(\xad)-F(\xdag), \xad-\xdag\rangle + \langle y-\ydel,\xad-\xdag\rangle \nonumber
\\&&+\alpha \norm{\xad-\xdag}^2+\alpha \langle \xdag-\bar{x},\xad-\xdag\rangle=0
\end{eqnarray}
and
\begin{eqnarray*}
&&\norm{F(\xad)-F(\xdag)}^2 + \langle y-\ydel,F(\xad)-F(\xdag)\rangle
\\&&+\alpha \langle F(\xad)-F(\xdag), \xad-\xdag\rangle+\alpha \langle \xdag-\bar{x},F(\xad)-F(\xdag)\rangle=0\,,
\end{eqnarray*}
respectively. By using the monotonicity condition \eqref{Fmon} of $F$ in combination with the Cauchy-Schwarz inequality this implies the following three basic estimates, which will be required below.
\begin{eqnarray}
&&\norm{\xad-\xdag}^2\leq \langle \xdag-\bar{x},\xdag-\xad\rangle+\frac{\delta}{\alpha}\norm{\xad-\xdag},\label{estx0}\\
&&\norm{\xad-\xdag}\leq \norm{\xdag-\bar{x}}+\frac{\delta}{\alpha},\label{estx}\\
&&\norm{F(\xad)-F(\xdag)}\leq \alpha\norm{\xdag-\bar{x}}+\delta.\label{estFx}
\end{eqnarray}
Taking into account these estimates one can conclude convergence with a priori or a posteriori parameter choice by standard  arguments of regularization theory.

\paragraph{\bf A priori parameter choice}

We choose $\alpha_*=\alpha_*(\delta)$ a priori such that it satisfies the limit conditions
\be{apriori}
\alpha_*(\delta)\to 0 \quad \mbox{and} \quad \frac{\delta}{\alpha_*(\delta)}\to 0 \quad \mbox{as} \quad \delta \to 0.
\ee
Inserting this into \req{estx} we obtain that $\norm{x_{\alpha_*(\delta)}^\delta-\xdag}$ is uniformly bounded as $\delta\to0$. Thus, for any sequence $\delta_n\to0$ there exists a weakly convergent subsequence $x_{\alpha_*(\delta_{n_k})}^{\delta_{n_k}}$ with weak limit $\tilde x$. On the other hand, \req{estFx} and \req{apriori} yield $F(x_{\alpha_*(\delta_n)}^{\delta_n})\to y$ and hence, from the weak sequential closedness \req{Fweakseqclosed}, we conclude that $\tilde x$ solves \req{Fxy}.  Thus, we have weak subsequential convergence to a solution. Convergence is even strong since by \req{estx0}, \req{estx}, which remain valid for any solution $\tilde x$ of \req{Fxy},
we have
\[
\begin{aligned}
&\limsup_{k\to\infty}\
\norm{x_{\alpha_*(\delta_{n_k})}^{\delta_{n_k}}-\tilde x}^2\\
&\leq \limsup_{k\to\infty}\left\{
\langle \tilde x-\bar{x},\tilde x-x_{\alpha_*(\delta_{n_k})}^{\delta_{n_k}}\rangle
+\frac{\delta_{n_k}}{\alpha_*(\delta_{n_k})} \norm{x_{\alpha_*(\delta_{n_k})}^{\delta_{n_k}}-\tilde x}\right\}
=0\,,
\end{aligned}
\]
where we have used  weak convergence,  \req{apriori} and uniform boundedness of $\norm{x_{\alpha_*(\delta_{n_k})}^{\delta_{n_k}}-\tilde x}$. Along the lines of the proof of \cite[Theorem~2.1.2]{AlbRya06}, we show that every such limit $\tilde x$ is the $\bar{x}$-minimum-norm solution. Let $\xdag$ be an arbitrary element of $L$.
From \req{estx0} we have
$$\langle \xdag-\bar{x},\xdag-x_{\alpha_*(\delta_{n_k})}^{\delta_{n_k}}\rangle+\frac{\delta_{n_k}}{\alpha_*(\delta_{n_k})}\norm{x_{\alpha_*(\delta_{n_k})}^{\delta_{n_k}}-\xdag} \ge 0 \qquad \mbox{for all}\quad \xdag \in L$$
and hence, under \req{apriori} with  $x_{\alpha_*(\delta_{n_k})}^{\delta_{n_k}}\to \tilde x \in L$ as $k \to \infty$, the following holds:
$$\langle \xdag-\bar{x},\xdag-\tilde x\rangle\ge 0 \qquad \mbox{for all}\quad \xdag \in L.$$
Due to the convexity of $L$, one has $x_t:=t\tilde x+(1-t)\xdag_{min} \in L$, for all $0 \le t <1$. By setting $\xdag:=x_t$
$$\langle x_t-\bar{x},(1-t)(\xdag_{min}-\tilde x)\rangle\ge 0\qquad \mbox{for all}\quad 0 \le t <1,$$
one has
$$\langle x_t-\bar{x},\xdag_{min}-\tilde x\rangle\ge 0$$
which yields, for the limit $t \to 1$,
$$\langle \tilde x-\bar{x},\xdag_{min}-\tilde x\rangle\ge 0 \quad \mbox{and}\quad \langle \tilde x-\bar{x},\xdag_{min}-\bar{x} \rangle \ge \|\tilde x-\bar{x}\|^2.$$
This, however, provides us with the inequality $\|\tilde x-\bar{x}\| \le \|\xdag_{min}-\bar{x}\|$ which shows that $\tilde x$ is the  $\bar{x}$-minimum-norm solution.
By a subsequence-subsequence argument, the whole sequence converges to $\xdag_{min}$. The discussion above proves the following proposition.

\begin{proposition}
For an a priori parameter choice $\alpha_*=\alpha_*(\delta)$ satisfying \req{apriori} and a sequence $\delta_n\to0$, the sequence of associated regularized solutions $x_{\alpha_*(\delta_{n})}^{\delta_{n}}$
converges  strongly to the $\bar{x}$-minimum-norm solution $\xdag_{min}$ of \req{Fxy}.
\end{proposition}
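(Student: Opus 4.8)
The plan is to run the classical three-step regularization argument: (a) boundedness and weak subsequential compactness of the regularized solutions, (b) identification of each weak cluster point as a solution via the weak-to-norm closedness of $F$, and (c) upgrading to strong convergence while simultaneously pinning down the cluster point as the $\bar x$-minimum-norm solution. Fix an arbitrary $\xdag\in L$. First I would invoke estimate \req{estx}, which gives $\norm{x_{\alpha_*(\delta_n)}^{\delta_n}-\xdag}\le\norm{\xdag-\bar x}+\delta_n/\alpha_*(\delta_n)$; by \req{apriori} the right-hand side is bounded, so $\{x_{\alpha_*(\delta_n)}^{\delta_n}\}$ is bounded and, $X$ being a Hilbert space, some subsequence converges weakly to a limit $\tilde x$. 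Then estimate \req{estFx} together with \req{apriori} yields $\norm{F(x_{\alpha_*(\delta_n)}^{\delta_n})-y}\le\alpha_*(\delta_n)\norm{\xdag-\bar x}+\delta_n\to0$, so by \req{Fweakseqclosed} we get $\tilde x\in\mathcal{D}(F)$ and $F(\tilde x)=y$, i.e.\ $\tilde x\in L$.

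The heart of the argument is the passage from weak to strong convergence, and it rests on the fact that estimate \req{estx0} is valid for \emph{every} solution, $\tilde x$ included. Substituting $\tilde x$ for $\xdag$ in \req{estx0} along the weakly convergent subsequence gives $\norm{x_{\alpha_*(\delta_{n_k})}^{\delta_{n_k}}-\tilde x}^2\le\langle\tilde x-\bar x,\tilde x-x_{\alpha_*(\delta_{n_k})}^{\delta_{n_k}}\rangle+\frac{\delta_{n_k}}{\alpha_*(\delta_{n_k})}\norm{x_{\alpha_*(\delta_{n_k})}^{\delta_{n_k}}-\tilde x}$; the first term on the right tends to $0$ because $x_{\alpha_*(\delta_{n_k})}^{\delta_{n_k}}\rightharpoonup\tilde x$, and the second because of \req{apriori} and the uniform bound from step (a). Hence $x_{\alpha_*(\delta_{n_k})}^{\delta_{n_k}}\to\tilde x$ strongly.

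It remains to show $\tilde x=\xdag_{min}$. Keeping $\xdag\in L$ arbitrary and letting $k\to\infty$ in \req{estx0} (using strong convergence of the subsequence and \req{apriori}) yields the variational characterization $\langle\xdag-\bar x,\xdag-\tilde x\rangle\ge0$ for all $\xdag\in L$. Since in setting (i) the solution set $L$ is convex and closed (maximal monotonicity of $F$), I may insert $\xdag:=x_t:=t\tilde x+(1-t)\xdag_{min}\in L$ for $0\le t<1$; dividing by $(1-t)>0$ and sending $t\to1$ gives $\langle\tilde x-\bar x,\xdag_{min}-\tilde x\rangle\ge0$, equivalently $\langle\tilde x-\bar x,\xdag_{min}-\bar x\rangle\ge\norm{\tilde x-\bar x}^2$, so Cauchy--Schwarz forces $\norm{\tilde x-\bar x}\le\norm{\xdag_{min}-\bar x}$; uniqueness of the $\bar x$-minimum-norm solution then gives $\tilde x=\xdag_{min}$. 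Since every weak cluster point of $\{x_{\alpha_*(\delta_n)}^{\delta_n}\}$ equals $\xdag_{min}$ and the whole sequence is bounded, a standard subsequence-of-every-subsequence argument shows that the full sequence converges strongly to $\xdag_{min}$. The only delicate point is the weak-to-strong upgrade in the middle step; once \req{estx0} is exploited with $\tilde x$ in the reference slot, the remainder is routine regularization bookkeeping.
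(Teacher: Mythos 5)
Your proposal is correct and follows essentially the same route as the paper's own argument: boundedness and weak subsequential convergence from \req{estx}, identification of the limit as a solution via \req{estFx} and \req{Fweakseqclosed}, the weak-to-strong upgrade by applying \req{estx0} with the weak limit $\tilde x$ in place of $\xdag$, and the minimum-norm identification through the variational inequality $\langle\xdag-\bar x,\xdag-\tilde x\rangle\ge 0$ on the convex closed set $L$ with the convex combination $x_t$ and $t\to 1$, finished off by the subsequence-subsequence argument. No gaps.
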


\paragraph{\bf A posteriori parameter choice: a variant of discrepancy principle}
By inspection of the detailed considerations in the previous paragraph concerning the convergence for a priori choices of the regularization, we immediately find the following extension to a posteriori parameter choices.

\begin{proposition} \label{pro:apo}
For an a posteriori parameter choice $\alpha_*=\alpha_*(\delta,y^\delta)$, well-defined for sufficiently small $\delta>0$ and satisfying the conditions
\be{aposteriori}
\frac{\delta}{\alpha_*(\delta,y^\delta)}\to 0 \quad \mbox{as} \quad \delta \to 0
\ee
as well as
\be{limity}
\|F(x_{\alpha_*(\delta)}^{\delta})-y\| \to 0 \quad \mbox{as} \quad \delta \to 0,
\ee
the sequence of regularized solutions $x_{\alpha_*(\delta_{n})}^{\delta_{n}}$ associated with a sequence $\delta_n\to 0$
converges  strongly to the $\bar{x}$-minimum-norm solution  $\xdag_{min}$ of \req{Fxy}.
\end{proposition}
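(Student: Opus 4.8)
\emph{Proof idea.} The plan is to notice that the argument already carried out for the a priori choice goes through almost verbatim, since the three basic estimates \req{estx0}, \req{estx}, \req{estFx} were derived for an \emph{arbitrary} admissible value of the regularization parameter (in particular the data-dependent one $\alpha_*(\delta,y^\delta)$) and for any solution in $L$. The only place in the a priori proof where the property $\alpha_*(\delta)\to 0$ was used was to deduce, via \req{estFx}, that $F(x_{\alpha_*(\delta)}^\delta)\to y$; here that conclusion is simply postulated as \req{limity}. Everything else in the a priori proof relied solely on $\delta/\alpha_*\to 0$, on weak convergence, and on the weak closedness and convexity of $L$ established in Section~\ref{sec:conv}. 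So the task reduces to re-running those steps with $\alpha_*=\alpha_*(\delta,y^\delta)$.

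Concretely, I would fix a sequence $\delta_n\to0$, assume $n$ large enough that $\alpha_*(\delta_n,y^{\delta_n})$ is well-defined, and abbreviate $\alpha_n:=\alpha_*(\delta_n,y^{\delta_n})$, $x_n:=x_{\alpha_n}^{\delta_n}$. First, \req{estx} applied with $\xdag=\xdag_{min}$ together with \req{aposteriori} shows that $\norm{x_n-\xdag_{min}}$ is uniformly bounded, so some subsequence satisfies $x_{n_k}\rightharpoonup\tilde x$. Next, \req{limity} gives $F(x_{n_k})\to y$ in norm, whence the weak-to-norm sequential closedness \req{Fweakseqclosed} yields $\tilde x\in\mathcal{D}(F)$ and $F(\tilde x)=y$, i.e. $\tilde x\in L$. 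To upgrade to strong convergence I would use \req{estx0} with $\tilde x$ in place of $\xdag$:
\[
\limsup_{k\to\infty}\norm{x_{n_k}-\tilde x}^2
\le \limsup_{k\to\infty}\Bigl\{\langle\tilde x-\bar x,\tilde x-x_{n_k}\rangle
+\tfrac{\delta_{n_k}}{\alpha_{n_k}}\,\norm{x_{n_k}-\tilde x}\Bigr\}=0,
\]
using $x_{n_k}\rightharpoonup\tilde x$, the boundedness of $\norm{x_{n_k}-\tilde x}$, and \req{aposteriori}.

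Finally, I would identify $\tilde x$ with $\xdag_{min}$ exactly as in the a priori case: from \req{estx0}, $\langle\xdag-\bar x,\xdag-x_{n_k}\rangle+\tfrac{\delta_{n_k}}{\alpha_{n_k}}\norm{x_{n_k}-\xdag}\ge0$ for every $\xdag\in L$; passing to the limit (using $x_{n_k}\to\tilde x$ and \req{aposteriori}) gives $\langle\xdag-\bar x,\xdag-\tilde x\rangle\ge0$ for all $\xdag\in L$; inserting $x_t:=t\tilde x+(1-t)\xdag_{min}\in L$ and letting $t\to1$ yields $\langle\tilde x-\bar x,\xdag_{min}-\bar x\rangle\ge\norm{\tilde x-\bar x}^2$, hence $\norm{\tilde x-\bar x}\le\norm{\xdag_{min}-\bar x}$, so $\tilde x=\xdag_{min}$ by uniqueness of the $\bar x$-minimum-norm solution. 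A subsequence-of-subsequence argument then promotes this to convergence of the whole sequence $x_n\to\xdag_{min}$.

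I do not anticipate a genuine obstacle here: the result is a routine extension of the a priori statement. The only point requiring attention is the bookkeeping observation that, after the extraction of the weak limit, no step invoked $\alpha_*\to0$ — only $\delta/\alpha_*\to0$, the a priori bound \req{estx}, and the structure of $L$ — so that replacing the \emph{derived} limit $F(x)\to y$ by the \emph{assumed} limit \req{limity} is exactly what makes the transfer work. (The separate question of whether a concrete discrepancy-type rule actually realizes both \req{aposteriori} and \req{limity} is not part of the statement and would be addressed elsewhere.)
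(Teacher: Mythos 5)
Your proposal is correct and is exactly the paper's argument: the paper proves Proposition~\ref{pro:apo} precisely ``by inspection'' of the a~priori case, observing as you do that the estimates \req{estx0}--\req{estFx} hold for any admissible $\alpha$ and that the only role of $\alpha_*\to0$ was to derive $F(x_{\alpha_*}^{\delta})\to y$, which is now hypothesis \req{limity}. Your write-up merely makes explicit the steps the paper leaves implicit.
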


Now we are going to apply Proposition~\ref{pro:apo} to a variant of the discrepancy principle (compare to the residual principle in Chapter 3 of \cite{AlbRya06}) and assume in this context that
\be{pospar}
\|F(\bar{x})-y\|>0, \quad \mbox{i.e.$\quad\bar{x}$ does not solve \req{Fxy}}.
\ee
Given a sufficiently small noise level $\delta>0$ and data $y^\delta \in X$ satisfying \req{delta}, we fix $0<\kappa<1,\;\tau>1$ and $\alpha_0>0$ such that
\be{alphanull}
\|F(x^\delta_{\alpha_0})-y^\delta\|>\tau \delta^\kappa.
\ee
Under \req{pospar}, the existence of such an $\alpha_0$ for sufficiently small $\delta>0$ is due to the limit condition
$$\lim \limits_{\alpha \to \infty} \|F(\xad)-\ydel\|=\|F(\bar{x})-\ydel\|$$
and $\|F(\bar{x})-\ydel\| \ge \|F(\bar{x})-y\|-\delta > \tau \delta^\kappa$ for $\delta<\min(1, \left(\frac{\|F(\bar{x})-y\|}{\tau+1}\right)^{1/\kappa}).$
Moreover, we know that there is some $\underline{\alpha}>0$ such that
$$\|F(x^\delta_{\alpha})-y^\delta\| \le \tau \delta^\kappa \quad \mbox{for all} \quad 0<\alpha \le \underline{\alpha}.$$
This is a consequence of \req{Lavrentiev},\req{estx} and the estimate
$$\|F(\xad)-\ydel\|=\alpha\|\xad-\bar{x}\| \le \alpha \left(\|\xad-\bar{x}\|+\|\xdag-\bar{x}\| \right) \le 2 \alpha \|\xdag-\bar{x}\|+\delta, $$
which yields $\underline{\alpha}=\frac{(\tau-1)\delta^\kappa}{2\|\xdag-\bar{x}\|}$.

In the following we choose $\alpha_*=\alpha_*(\delta,\ydel)$ along a fixed geometrically decaying sequence $\alpha_k=q^k\alpha_0$ for some $q\in(0,1)$ according to
\be{discrepancy}
\alpha_*=\max\{\alpha_k\geq0 : \norm{F(x_{\alpha_k}^\delta)-\ydel}\leq \tau \delta^\kappa\}.
\ee
This is a variant of the sequential discrepancy principle, see also \cite{AnzHofMath13}. Well-definedness and strict positivity of $\alpha_*$ follow from the existence of positive values $\alpha_0$ and $\underline{\alpha}$.
 Estimate \req{estFx} together with \req{discrepancy} yields, for $\delta<1$,
\[
\tau\delta^\kappa<\norm{F(x_{\frac{\alpha_*}{q}}^\delta)-\ydel}\leq \frac{\alpha_*}{q}\norm{\xdag-\bar{x}}+\delta \leq \frac{\alpha_*}{q}\norm{\xdag-\bar{x}}+\delta^\kappa,
\]
and  thus, the following lower bound for $\alpha_*$ 
\[
\alpha_*\geq \frac{q(\tau-1)}{\norm{\xdag-\bar{x}}}\delta^\kappa.
\]
Consequently, for sufficiently small $\delta>0$ one has 
$$\frac{\delta}{\alpha_*} \le \delta^{1-\kappa} \frac{\|\xdag-\bar{x}\|}{q(\tau-1)} \to 0  \quad \mbox{as} \quad \delta \to 0, $$
implying \req{aposteriori}. To see \req{limity} from \req{discrepancy}, we use the triangle inequality as
\[
\norm{F(x_{\alpha_*}^\delta)-y}\leq \norm{F(x_{\alpha_*}^\delta)-\ydel}+\delta \leq (\tau+1)\delta^{\kappa}\to 0 \quad  \mbox{ as } \quad \delta\to0\,.
\]
Hence, Proposition~\ref{pro:apo} applies for this specific variety of a discrepancy principle and arbitrary choices of the exponent $0<\kappa<1$. Note that one can show only weak convergence  $x_{\alpha_*(\delta_{n})}^{\delta_{n}} \rightharpoonup \xdag_{min}$ as $n \to \infty$  if the exponent is $\kappa=1$.

{As an alternative a posteriori choice of the regularization parameter for Lavrentiev's method we can also exploit the Lepski\u\i\ or balancing principle (cf., e.g.,~\cite{Mat06}), which even yields convergence rates (see Theorem~\ref{th:rates_vi}).
}

\section{Convergence rates under variational source conditions} \label{sec:rates_var}

As mentioned in the introduction, we investigate below error estimations under several types of source conditions on solutions of \eqref{Fxy}. Actually,
the solution $\xdag$  referred to in all these source conditions has to be  the $\bar{x}$-minimum-norm solution in the current setting of globally monotone operators $F$ defined on Hilbert spaces. However, we prefer the $\xdag$ notation in order to anticipate the more general situations when the solution set $L$ of \eqref{Fxy} is not necessarilty convex and closed.

\subsection{Variational source condition from Tikhonov regularization} \label{subsec:rates_varTikh}

First we consider  the variational source condition
\be{vsc}
\langle \xdag-\bar x,\xdag-x\rangle\leq \beta\norm{\xdag-x}^2+\psi(\|F(x)-F(\xdag)\|) \qquad \mbox{for all} \quad x\in \mathcal{M},
\ee
which up to some factor 2 on the left-hand side coincides (for $\psi(t)=t$) with the variational source condition originally developed in \cite{HKPS07} for obtaining convergence rates in Tikhonov regularization. Here we use a constant  $\beta\in [0,1)$, some index function $\psi$ and a solution $\xdag$ of \eqref{Fxy}.
The set $\mathcal{M} \subset X$ in \req{vsc} must contain all regularized solutions $\xad$ of interest for $\delta>0$ sufficiently small and $\alpha>0$ chosen appropriately.
For instance, the results in Section~\ref{sec:conv} guarantee that,
under condition \eqref{Fweakseqclosed} and
with an appropriate choice of $\alpha$ in dependence of $\delta$ and a fixed but possibly small radius $\rho>0$,
the set $\mathcal{M}=\mathcal{B}_\rho(\xdag)$ contains all $\xad$ for $\delta>0$ sufficiently small.
By homogeneity argument in the case of a linear operator $F=A \in \mathcal{L}(X,X)$, we conclude that \eqref{vsc} is realistic only for index functions decaying  to zero not faster than $\psi(t)\sim t$ as $t \to +0$. Precisely, taking $x:=\xdag+\varepsilon(\bar{x}-\xdag)$ with $\varepsilon>0$ in \eqref{vsc} implies
\[
\varepsilon\norm{\xdag-\bar x}^2\leq
\beta\varepsilon^2\norm{\xdag-\bar{x}}^2+\psi(\varepsilon\norm{A(\xdag-\bar x)}),
\]
i.e., unless the trivial case $\xdag-\bar x\in\mathcal{N}(A)$ holds, we can divide by
$t:=\varepsilon\norm{A(\xdag-\bar x)}$ to conclude that
\be{psi_opt_vsc}
\lim_{t\to +0}\frac{\psi(t)}{t}\geq \underline{c}>0
\ee
for $\underline{c}=\frac{\norm{\xdag-\bar x}^2}{\norm{A(\xdag-\bar x)}}>0$.

As a matter of fact, if $0 \le \beta\leq\frac12$ and $\mathcal{M}$ contains an $\bar x$-minimum-norm solution, then all solutions $\xdag$ satisfying \eqref{vsc} have to be $\bar x$- minimum-norm solutions. Namely, using \eqref{vsc} with $x=\xdag_{min}$ implies
\[
-\frac12\norm{\xdag_{min}-\bar{x}}^2+\frac12\norm{\xdag-\bar{x}}^2+\frac12\norm{\xdag-\xdag_{min}}^2
=\langle \xdag-\bar x,\xdag-\xdag_{min}\rangle\leq \beta\norm{\xdag-\xdag_{min}}^2\,,
\]
i.e., with $0\le \beta\leq\frac12$, we get $\norm{\xdag-\bar{x}}^2\leq \norm{\xdag_{min}-\bar{x}}^2$.


\begin{theorem}\label{th:apriori}
Let $F:X\to X$ be a hemicontinuous and monotone operator and $\mathcal{M}$ be a subset of the Hilbert space $X$ such that \eqref{vsc} is satisfied. Moreover, let $\alpha$ be chosen such that all
 regularized solutions $\xad$ from \req{Lavrentiev} belong to $\mathcal{M}$ for sufficiently small $\delta>0$.
Then one has the estimate
\be{estim1}
\|\xad-\xdag\|^2\leq \frac{{1}}{(1-\beta)^2} \frac{\delta^2}{\alpha^2}+\frac{2}{1-\beta} \psi(\delta+c\alpha),
\ee
with $c=\norm{\xdag-\bar{x}}$ for  $\delta>0$ as above. For the a priori choice
\be{alpha_vsc_Tikh}
\alpha(\delta)\sim\Phi^{-1}(\delta^2)
\ee
with the index function $\Phi(\alpha):=\alpha^2\psi(\alpha)$ which satisfies \eqref{apriori},
this yields the convergence rate
\be{rate_vsc_Tikh}
\|x_{\alpha(\delta)}^\delta-\xdag\|=O\left( \frac{\delta}{\Phi^{-1}(\delta^2)}\right)
=O\left( \sqrt{\psi(\Phi^{-1}(\delta^2))}\right).
\ee
The best possible rate 
{
under condition \req{vsc}
}
occurs for $\psi(t) \sim t$ as
\be{estimate11}
\|x_{\alpha(\delta)}^\delta-\xdag\|=O(\delta^\frac{1}{3}) \qquad \mbox{if} \quad \alpha(\delta)\sim \delta^\frac{2}{3}.
\ee
\end{theorem}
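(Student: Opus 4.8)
The plan is to start from the basic error estimate \req{estx0}, which reads $\norm{\xad-\xdag}^2 \le \langle \xdag-\bar x,\xdag-\xad\rangle + \frac{\delta}{\alpha}\norm{\xad-\xdag}$. Since we are given that $\xad \in \mathcal{M}$ for $\delta$ small, we may apply the variational source condition \req{vsc} with $x = \xad$ to bound the inner product term by $\beta\norm{\xdag-\xad}^2 + \psi(\|F(\xad)-F(\xdag)\|)$. Substituting gives
\[
(1-\beta)\norm{\xad-\xdag}^2 \le \psi(\|F(\xad)-F(\xdag)\|) + \frac{\delta}{\alpha}\norm{\xad-\xdag}.
\]
Next I would control the argument of $\psi$ using \req{estFx}, namely $\|F(\xad)-F(\xdag)\| \le \alpha\norm{\xdag-\bar x} + \delta = c\alpha + \delta$; since $\psi$ is increasing, $\psi(\|F(\xad)-F(\xdag)\|) \le \psi(\delta + c\alpha)$. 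This yields a quadratic inequality in $s := \norm{\xad-\xdag}$ of the form $(1-\beta)s^2 - \frac{\delta}{\alpha}s - \psi(\delta+c\alpha) \le 0$.

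From that quadratic inequality one reads off the bound $s \le \frac{1}{2(1-\beta)}\left(\frac{\delta}{\alpha} + \sqrt{\frac{\delta^2}{\alpha^2} + 4(1-\beta)\psi(\delta+c\alpha)}\right)$, and then using $(a+b)^2 \le 2a^2+2b^2$ (or equivalently $\sqrt{u+v}\le\sqrt u+\sqrt v$ applied carefully) one arrives at \req{estim1}:
\[
\norm{\xad-\xdag}^2 \le \frac{1}{(1-\beta)^2}\frac{\delta^2}{\alpha^2} + \frac{2}{1-\beta}\psi(\delta+c\alpha).
\]
I would just double-check the constants: squaring $\frac{1}{2(1-\beta)}(\frac{\delta}{\alpha}+\sqrt{\cdots})$ and applying $(p+q)^2\le 2p^2+2q^2$ gives $\frac{1}{2(1-\beta)^2}\frac{\delta^2}{\alpha^2} + \frac{1}{2(1-\beta)^2}(\frac{\delta^2}{\alpha^2}+4(1-\beta)\psi(\delta+c\alpha))$, which simplifies to exactly the claimed right-hand side. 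This is the routine part.

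For the a priori choice, set $\Phi(\alpha) := \alpha^2\psi(\alpha)$; since $\psi$ is an index function, so is $\Phi$, hence $\Phi^{-1}$ exists and is an index function, and $\alpha(\delta)\sim\Phi^{-1}(\delta^2)\to 0$ with $\frac{\delta^2}{\alpha(\delta)^2}\sim\frac{\delta^2}{\Phi^{-1}(\delta^2)^2}=\psi(\Phi^{-1}(\delta^2))\to 0$, so \req{apriori} holds (verifying $\delta/\alpha\to0$ needs $\delta^2/\alpha^2\to0$, which is what we just showed). Plugging this $\alpha$ into \req{estim1}: the first term is $\sim\frac{\delta^2}{\Phi^{-1}(\delta^2)^2} = \psi(\Phi^{-1}(\delta^2))$; for the second term, $\psi(\delta + c\alpha)$ — here I would argue that $\delta$ is asymptotically dominated by $c\alpha(\delta)=c\Phi^{-1}(\delta^2)$ up to a constant (since $\Phi^{-1}(\delta^2)\gg\delta$ eventually, because $\psi(t)/t$ is bounded below away from zero only in the borderline case; more generally one checks $\Phi(\delta) = \delta^2\psi(\delta) \le \delta^2 = \Phi(\Phi^{-1}(\delta^2))$ for small $\delta$, whence $\delta\le\Phi^{-1}(\delta^2)$), so $\psi(\delta+c\alpha)\le\psi((1+c)\alpha)\le C\psi(\alpha)$ for a constant $C$ using e.g. sub-homogeneity, giving the same order $\psi(\Phi^{-1}(\delta^2))$. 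Hence $\norm{x_{\alpha(\delta)}^\delta-\xdag}^2 = O(\psi(\Phi^{-1}(\delta^2)))$, which is \req{rate_vsc_Tikh} in both of its equivalent forms. Finally, for $\psi(t)=t$ one has $\Phi(\alpha)=\alpha^3$, $\Phi^{-1}(\delta^2)=\delta^{2/3}$, so $\alpha(\delta)\sim\delta^{2/3}$ and the rate is $\sqrt{\delta^{2/3}}=\delta^{1/3}$, giving \req{estimate11}; optimality over \req{vsc} follows from \req{psi_opt_vsc}, which forbids $\psi$ decaying faster than linearly.

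The main obstacle is the step handling $\psi(\delta + c\alpha)$ under the a priori choice: one must argue cleanly that replacing $\delta+c\alpha$ by a constant multiple of $\alpha(\delta)$ costs only a constant factor, which requires either a mild regularity assumption on $\psi$ (such as $\psi(\lambda t)\le\lambda\psi(t)$ for $\lambda\ge1$, i.e. concavity-type behaviour, or at worst $\psi(2t)\le C\psi(t)$) together with the comparison $\delta\lesssim\Phi^{-1}(\delta^2)$. Everything else — the quadratic inequality, the constant bookkeeping in \req{estim1}, and the explicit computation for $\psi(t)\sim t$ — is mechanical.
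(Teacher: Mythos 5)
Your argument is correct and follows essentially the same route as the paper: apply \eqref{vsc} to $x=\xad$ in the basic identity coming from testing \eqref{Lavrentiev}, bound the argument of $\psi$ via \eqref{estFx}, and resolve the resulting quadratic inequality (the paper packages this step as the implication \eqref{quadest}, with exactly your constants), then balance the two terms of \eqref{estim1} under the choice \eqref{alpha_vsc_Tikh} and use \eqref{psi_opt_vsc} for the optimality of $\delta^{1/3}$. The one place you are more careful than the paper is the comparison $\psi(\delta+c\alpha)\le C\,\psi(\alpha)$ under the a priori choice, which the paper passes over silently; your observation that this needs $\delta\lesssim\Phi^{-1}(\delta^2)$ together with a mild doubling or sub-homogeneity property of $\psi$ (automatic for concave index functions, which is the relevant regime in view of \eqref{psi_opt_vsc}) is a legitimate refinement rather than a deviation.
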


\begin{proof} Let $\gamma:=\langle F(\xad)-F(x^\dagger),\xad-\xdag\rangle$. Note that $\xad\in\mathcal{M}$ for sufficiently small $\delta>0$,  whenever  the a priori parameter choice $\alpha=\alpha(\delta)$ satisfies  condition \req{apriori} guaranteeing (due to Section~\ref{sec:conv}) convergence of $\xad$ to $\xdag$ and if $\mathcal{M}=\mathcal{B}_\rho(\xdag)$ for some $\rho>0$.

From \eqref{bas4}, \eqref{vsc} and \eqref{delta}  it follows that
\begin{eqnarray}
\gamma+\alpha \norm{\xad-\xdag}^2&=& \langle \ydel-y,\xad-\xdag\rangle+\alpha \langle \xdag-\bar x,\xdag-\xad\rangle \label{id1}\\
 &\leq& \delta\|\xad-\xdag\|+\alpha(\beta\norm{\xad-\xdag}^2+\psi(\|F(\xad)-F(\xdag)\|)).
\nonumber
\end{eqnarray}
The monotonicity \req{Fmon} of $F$ implies $\gamma\geq 0$ and thus we can neglect $\gamma$ and estimate further as
\be{eq1}
(1-\beta)\norm{\xad-\xdag}^2\leq \frac{\delta}{\alpha}\|\xad-\xdag\|+\psi(\|F(\xad)-F(\xdag)\|).
\ee
By monotonicity of $\psi$ and \eqref{estFx} this yields
\[
\norm{\xad-\xdag}^2\leq \frac{1}{1-\beta}\frac{\delta}{\alpha}\|\xad-\xdag\|+\frac{1}{1-\beta}\psi(\delta+c\alpha).
\]
Since for any $d,c_1,c_2\geq0$
\be{quadest}
d^2\leq 2c_1d+c_2 \ \Rightarrow \ d^2\leq 4c_1^2+2c_2
\ee
holds, one obtains \eqref{estim1}.
Since  $\Phi$ is an index function, the same is valid  also for  $\Phi^{-1}$, as well as for  the compositions $\Phi^{-1}(\delta^2)$ and $\sqrt{\psi(\Phi^{-1}(\delta^2))}=\frac{\delta}{\Phi^{-1}(\delta^2)}$. Consequently, the a priori regularization parameter choice \req{alpha_vsc_Tikh} satisfies $\delta=O(\alpha(\delta))$ and moreover the limit conditions \eqref{apriori}, which are relevant for convergence (cf.~Section~\ref{sec:conv}). Then the convergence rate
\eqref{rate_vsc_Tikh} can be derived from \eqref{estim1} for the choice \eqref{alpha_vsc_Tikh}.
Due to \eqref{psi_opt_vsc}, we have  the inequality $\alpha^3 \le c_1 \Phi(\alpha)$, hence $\Phi^{-1}(\delta^2) \le c_2 \delta^{2/3}$ and
$\frac{\delta}{\Phi^{-1}(\delta^2)} \ge c_3 \delta^{1/3}$, for some positive constants $c_1,c_2$ and $c_3$. This shows that $O(\delta^{1/3})$ is the best possible rate attained for $\psi(t)\sim t$.
\end{proof}

\begin{remark}  {\rm The classical `adjoint' source condition $\xdag-\bar x \in \mathcal{R}(A^*)$ (which implies \eqref{vsc} with $\beta=0$ and $\psi(t)=t$ in the linear operator case) yields the convergence rate $O(\delta^{1/3})$. 
{Thus, the rate $O(\delta^{1/2})$ mentioned with no proof by Theorem 6 in \cite{LiuNash96} seems to be a typo.}
    However, the `no adjoint' source condition $\xdag-\bar x \in \mathcal{R}(A)$ occurring in \eqref{LavHoelder} for  $p=1$ implies the better rate $O(\delta^{1/2})$ (cf.~\cite{Taut02}). This is due to the higher rate $O(\alpha)$ for the approximation error (the exact data case), as compared to $O(\alpha^{1/2})$ derived from $\xdag-\bar x \in \mathcal{R}(A^*)$ (see, e.g., Theorem 5 in \cite{LiuNash96}), while the propagated data error is the same in both situations: $O(\delta/\alpha)$.
}\end{remark}\bigskip

\subsection{Variational source condition specific to Lavrentiev regularization} \label{subsec:rates_varLavr}

In this section we propose a variational source condition which seems to naturally fit the setting of Lavrentiev's method.

Recently, results in \cite{Sch_etal15} regarding Tikhonov regularization in the linear case point out connections between the standard source condition $x^\dagger\in\mathcal{R}((A^*A)^{\nu/2})$ for $\nu\in(0,1]$ and several types of variational source conditions along with the one
\be{vscHilb}
2\langle x^\dagger,x\rangle\leq\beta_1\|x\|^2+\beta_2\|Ax\|^\frac{2\nu}{\nu+1} \qquad \mbox{for all} \quad x\in X,
\ee
with constants $\beta_1\in[0,1)$ and $\beta_2\geq 0$, which is very similar to \eqref{vsc} for {power-type functions} $\psi$.
Searching for an adaption to the Lavrentiev setting,  it makes sense however to consider \eqref{lvsc} instead of \eqref{vscHilb},
\be{lvsc}
\langle x^\dagger,x\rangle\leq\beta_1\|x\|^2+\beta_2\langle Ax,x\rangle^\mu \qquad \mbox{for all} \quad x\in X,
\ee
with $\mu>0$ and constants $\beta_1,\beta_2$ as above.
Actually, we will deal with the more general version with respect to the nonlinear case
\be{lvsc_gen}
\langle x^\dagger-\bar x,\xdag-x\rangle\leq\beta\|x-\xdag\|^2
+\varphi(\langle F(x)-F(\xdag),x-\xdag\rangle) \qquad \mbox{for all} \quad x\in \mathcal{M},
\ee
with $\beta\in[0,1)$ and $\varphi$ an index function.
Again, no condition additional to monotonicity needs to be imposed on $F$.
As in Section~\ref{subsec:rates_varTikh}, there is a natural upper bound on the decay rate of $\varphi$ as $t$ tends to zero. To this end, consider again the linear case $F=A \in \mathcal{L}(X,X) $ and $x:=\xdag+\varepsilon(\bar{x}-\xdag)$ with $\varepsilon>0$ in \eqref{lvsc_gen}. This implies
\[
\varepsilon\norm{\xdag-\bar x}^2\leq
\beta\varepsilon^2\norm{\xdag-\bar{x}}^2
+\varphi(\varepsilon^2\langle A(\xdag-\bar x),\xdag-\bar x\rangle),
\]
which by division to $\sqrt{t}=\varepsilon\sqrt{\langle A(\xdag-\bar x),\xdag-\bar x\rangle}$ and by letting $\varepsilon\to 0$ enforces
\be{phi_opt_vsc}
\lim_{t\to0}\frac{\varphi(t)}{\sqrt{t}}\geq \underline{c}>0
\ee
with the constant $\underline{c}=\frac{\norm{\xdag-\bar{x}}}{\sqrt{\langle A(\xdag-\bar x),\xdag-\bar x\rangle}}>0$, unless the trivial case \linebreak $\langle A(\xdag-\bar x),\xdag-\bar x\rangle=0$ holds,
so that the best rate obtainable here is the one associated with  $\varphi(t)=\sqrt{t}$.

{
To quantify convergence rates, we introduce the index function $\Psi$ as follows.
Let $f$ be an index function such that its antiderivative $\tilde f(s):=\int_0^s f(t) dt$ satisfies
the condition $\tilde f(\varphi(s)) \le s$ for $s>0$ and let $G(\alpha) \ge \int_0^\alpha f^{-1}(\tau) d\tau$. Then we set $\Psi(\alpha):=\frac{G(\alpha)}{\alpha}$.
\\
We will first of all obtain rates by choosing
\be{alpha_vsc_Lavr}
\alpha_{apri}=\alpha(\delta):=\Theta^{-1}(\delta^2)
\ee
with
\begin{equation}\label{Theta}
\Theta(\alpha):=\alpha^2 \Psi(\alpha),
\end{equation}
which also satisfies  condition \eqref{apriori} for convergence without source condition.
\\
The same rates are achieved with an a posteriori parameter choice $\alpha_{Lep}$ according to the Lepski\u\i\ principle, which does not require the knowledge of $\Psi$. In this context, the regularization parameter $\alpha_{Lep}$ is determined as follows:
For prescribed $\tau >1$ and  $0 < q < 1$ we consider the function
$$ \Sigma(\alpha,\delta):= \left(\frac{\sqrt{3-2\beta}}{1-\beta}\right) \,\frac{\delta}{\alpha}$$
and the increasing geometric sequence
$$ \Delta_q := \{ \alpha_j : \;\alpha_j=\alpha_0/q^j, \; j \in \mathbb{N} \}$$
with $\alpha_0>0$ sufficiently small, such that $\|x^\delta_{\alpha_0}-\xdag\| \le \Sigma(\alpha_0,\delta)$. Then we set
\begin{equation}\label{alpha_vsc_Lavr_Leps}
\begin{aligned}
&\alpha_{Lep}=\alpha(\delta,y^\delta):=\\
&\max\{\alpha \in \Delta_q: \,\|x_{\alpha^\prime}^\delta-x_{\alpha}^\delta\| \le 2\, \Sigma(\alpha^\prime,\delta) \mbox{ for all } \alpha^\prime\in\Delta_q\cap [\alpha_0, \alpha]\}.
\end{aligned}
\end{equation}
\begin{theorem}\label{th:rates_vi}
Let $F:X\to X$ be a hemicontinuous and monotone operator and $\mathcal{M}$ be a subset of the Hilbert space $X$ such that \eqref{lvsc_gen} is satisfied. Moreover, let $\alpha$ be chosen such that all
 regularized solutions $\xad$ from \req{Lavrentiev} belong to $\mathcal{M}$ for sufficiently small $\delta>0$.
Then one has the estimate
\be{estim1_Lavr}
\|\xad-\xdag\|^2\leq \frac{{1}}{(1-\beta)^2} \frac{\delta^2}{\alpha^2}+\frac{2}{1-\beta} \Psi(\alpha),
\ee
for  $\delta>0$ as above.
For the a priori choice \eqref{alpha_vsc_Lavr}, this yields the convergence rate
\be{rate_vsc_Lavr}
\|x_{\alpha_{apri}}^\delta-\xdag\|=O\left( \frac{\delta}{\Theta^{-1}(\delta^2)}\right)
=O\left( \sqrt{\Psi(\Theta^{-1}(\delta^2))}\right),
\ee
with $\Theta$ as in \eqref{Theta}.
The best possible rate occurs for $\varphi(t)\sim\sqrt{t}$ and $\Psi(t)\sim t$ as
\be{estimate11_Lavr}
\|x_{\alpha_{apri}}^\delta-\xdag\|=O(\delta^\frac{1}{3}) \qquad
\mbox{if} \quad \alpha(\delta)\sim \delta^\frac{2}{3}.
\ee
The convergence rate \eqref{rate_vsc_Lavr} is also obtained with the Lepski\u\i\ principle \eqref{alpha_vsc_Lavr_Leps}
$$
\|x_{\alpha_{Lep}}^\delta-\xdag\|=O\left( \frac{\delta}{\Theta^{-1}(\delta^2)}\right)\,.
$$
\end{theorem}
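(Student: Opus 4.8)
The plan is to follow the proof of Theorem~\ref{th:apriori} verbatim up to the point where \req{estFx} and monotonicity of $\psi$ are used, and to replace that step by a Young-type absorption tailored to \req{lvsc_gen}. First I would derive \req{estim1_Lavr}. Fix $\delta$ small enough that $\xad\in\mathcal{M}$ (guaranteed by Section~\ref{sec:conv}, e.g.\ for $\mathcal{M}=\mathcal{B}_\rho(\xdag)$ and $\alpha$ obeying \req{apriori}), and set $\gamma:=\langle F(\xad)-F(\xdag),\xad-\xdag\rangle\ge 0$. Starting from \req{bas4}, rearranged as the equality in \req{id1}, using $\langle\ydel-y,\xad-\xdag\rangle\le\delta\norm{\xad-\xdag}$ by \req{delta} and the source condition \req{lvsc_gen} at $x=\xad$ to bound $\langle\xdag-\bar x,\xdag-\xad\rangle$, one gets
\[
\gamma+\alpha(1-\beta)\norm{\xad-\xdag}^2\le\delta\norm{\xad-\xdag}+\alpha\,\varphi(\gamma).
\]
The crucial step is to bound $\alpha\,\varphi(\gamma)$ by Young's inequality for the index function $f$, applied to the pair $(\varphi(\gamma),\alpha)$:
\[
\alpha\,\varphi(\gamma)\le\int_0^{\varphi(\gamma)}f(t)\,dt+\int_0^\alpha f^{-1}(\tau)\,d\tau=\tilde f(\varphi(\gamma))+\int_0^\alpha f^{-1}(\tau)\,d\tau\le\gamma+G(\alpha),
\]
where the last inequality uses $\tilde f(\varphi(s))\le s$ with $s=\gamma$, and $G(\alpha)\ge\int_0^\alpha f^{-1}$. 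The two copies of $\gamma$ cancel, leaving $\alpha(1-\beta)\norm{\xad-\xdag}^2\le\delta\norm{\xad-\xdag}+G(\alpha)$; dividing by $\alpha(1-\beta)$ and using $\Psi(\alpha)=G(\alpha)/\alpha$ gives $\norm{\xad-\xdag}^2\le\tfrac{1}{1-\beta}\tfrac{\delta}{\alpha}\norm{\xad-\xdag}+\tfrac{1}{1-\beta}\Psi(\alpha)$, and \req{quadest} (with $d=\norm{\xad-\xdag}$, $2c_1=\tfrac{1}{1-\beta}\tfrac{\delta}{\alpha}$, $c_2=\tfrac{1}{1-\beta}\Psi(\alpha)$) yields \req{estim1_Lavr}.

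For the a priori choice \req{alpha_vsc_Lavr}, the function $\Theta(\alpha)=\alpha^2\Psi(\alpha)$ is an index function (a product of index functions), hence so is $\Theta^{-1}$, and $\alpha_{apri}=\Theta^{-1}(\delta^2)$ satisfies \req{apriori} (as already noted after \req{Theta}); therefore $\xad\to\xdag$ and $\xad\in\mathcal{M}$ for $\delta$ small. Since $\Theta(\alpha_{apri})=\delta^2$ means $\Psi(\alpha_{apri})=\delta^2/\alpha_{apri}^2$, estimate \req{estim1_Lavr} collapses to $\norm{\xad-\xdag}^2\le\tfrac{3-2\beta}{(1-\beta)^2}\tfrac{\delta^2}{\alpha_{apri}^2}$, which is \req{rate_vsc_Lavr} (the second form via $\delta/\alpha_{apri}=\sqrt{\Psi(\alpha_{apri})}$). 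In the optimal situation $\varphi(t)\sim\sqrt t$ one may take $f(t)\sim t$, whence $\Psi(t)\sim t$, $\Theta(\alpha)\sim\alpha^3$, $\alpha_{apri}\sim\delta^{2/3}$, giving \req{estimate11_Lavr}. Conversely, \req{phi_opt_vsc} forces, through $\tilde f(\varphi(s))\le s$, a bound $f(v)\le c\,v$ near $0$, hence $f^{-1}(\tau)\ge c^{-1}\tau$, $G(\alpha)\ge c_1\alpha^2$, $\Psi(\alpha)\ge c_1\alpha$, $\Theta(\alpha)\ge c_1\alpha^3$; thus $\Theta^{-1}(\delta^2)\le c_2\delta^{2/3}$ and $\delta/\Theta^{-1}(\delta^2)\ge c_3\delta^{1/3}$ for positive constants $c_1,c_2,c_3$ -- exactly as in the proof of Theorem~\ref{th:apriori} -- so $O(\delta^{1/3})$ is the best rate obtainable under \req{lvsc_gen}.

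For the Lepski\u\i\ choice \req{alpha_vsc_Lavr_Leps}, the key observation is that whenever $\alpha\le\alpha_{apri}$ one has $\Theta(\alpha)\le\delta^2$, i.e.\ $\Psi(\alpha)\le\delta^2/\alpha^2$, so \req{estim1_Lavr} gives $\norm{\xad-\xdag}\le\Sigma(\alpha,\delta)$. Let $\alpha^*:=\max\{\alpha\in\Delta_q:\alpha\le\alpha_{apri}\}$, which is well-defined for $\delta$ small since ``$\alpha_0$ sufficiently small'' may be taken to mean $\alpha_0\le\alpha_{apri}$ (consistently with the starting requirement $\norm{x_{\alpha_0}^\delta-\xdag}\le\Sigma(\alpha_0,\delta)$, which is precisely \req{estim1_Lavr} in the noise-dominated regime). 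For $\alpha_0\le\alpha'\le\alpha\le\alpha^*$ in $\Delta_q$, since $\Sigma(\cdot,\delta)$ is decreasing,
\[
\norm{x_{\alpha'}^\delta-x_\alpha^\delta}\le\norm{x_{\alpha'}^\delta-\xdag}+\norm{x_\alpha^\delta-\xdag}\le\Sigma(\alpha',\delta)+\Sigma(\alpha,\delta)\le 2\,\Sigma(\alpha',\delta),
\]
so $\alpha^*$ lies in the admissible set of \req{alpha_vsc_Lavr_Leps}, whence $\alpha_{Lep}\ge\alpha^*$ (the set is nonempty; it is bounded above -- so the maximum exists -- because $x_\alpha^\delta\to\bar x$ as $\alpha\to\infty$ while the low-index iterates stay within $\Sigma(\alpha_0,\delta)$ of $\xdag$). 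Applying the defining property of $\alpha_{Lep}$ with $\alpha'=\alpha^*\in\Delta_q\cap[\alpha_0,\alpha_{Lep}]$ gives $\norm{x_{\alpha^*}^\delta-x_{\alpha_{Lep}}^\delta}\le 2\,\Sigma(\alpha^*,\delta)$, hence $\norm{x_{\alpha_{Lep}}^\delta-\xdag}\le\norm{x_{\alpha_{Lep}}^\delta-x_{\alpha^*}^\delta}+\norm{x_{\alpha^*}^\delta-\xdag}\le 3\,\Sigma(\alpha^*,\delta)$. Finally, the successor of $\alpha^*$ in $\Delta_q$ exceeds $\alpha_{apri}$, so $\alpha^*>q\,\alpha_{apri}$ and $\Sigma(\alpha^*,\delta)<q^{-1}\Sigma(\alpha_{apri},\delta)=O(\delta/\Theta^{-1}(\delta^2))$, which gives the stated rate for $\alpha_{Lep}$.

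The step I expect to be the main obstacle is the absorption in the first paragraph: it is precisely in order to make $\alpha\,\varphi(\gamma)$ cancel the term $\gamma$ coming from testing \req{Lavrentiev} with $\xad-\xdag$ -- which is nonnegative by monotonicity but can no longer simply be discarded once $\varphi$ has been applied to it -- that the auxiliary index function $f$, the condition $\tilde f(\varphi(s))\le s$, and the quantities $G$ and $\Psi$ were introduced; everything else is a direct transcription of the proof of Theorem~\ref{th:apriori} or the routine bookkeeping of the balancing principle. A secondary technical point is the rigorous well-definedness of the maximum in \req{alpha_vsc_Lavr_Leps} together with the verification that the iterates of index $\le\alpha_{apri}$ entering the Lepski\u\i\ comparison indeed lie in $\mathcal{M}$ -- which holds because convergence $\xad\to\xdag$ is ensured for such parameters by Section~\ref{sec:conv}.
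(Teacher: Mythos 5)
Your proposal is correct and follows essentially the same route as the paper: the same Young's-inequality absorption of $\alpha\varphi(\gamma)$ against the nonnegative term $\gamma$ via $\tilde f(\varphi(\gamma))\le\gamma$, the same use of \req{quadest} to reach \req{estim1_Lavr}, and the same equilibration $\Psi(\alpha_{apri})=\delta^2/\alpha_{apri}^2$ for the a priori rate. The only difference is cosmetic: for the Lepski\u\i\ choice the paper simply observes $\|\xad-\xdag\|\le\Sigma(\alpha,\delta)$ for $\alpha_0\le\alpha\le\alpha_{apri}$ and invokes \cite[Proposition~1]{Mat06}, whereas you reprove that oracle inequality inline (the $\alpha^*$/triangle-inequality argument yielding the factor $3$ and the loss $q^{-1}$), which is exactly the content of the cited proposition.
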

}

\begin{proof} Note that $\xad\in\mathcal{M}$, for all $\delta>0$ sufficiently small, due to the convergence results in the a priori parameter choice case.
Let again $\gamma:=\langle F(\xad)-F(x^\dagger),\xad-\xdag\rangle\geq 0$.
By \eqref{id1} and \eqref{lvsc_gen} written for $x:=\xad$ one has
\[
\gamma+\alpha \|\xad-\xdag\|^2
\leq \delta\|\xad-\xdag\|+\alpha(\beta\|\xad-\xdag\|^2+\varphi(\gamma)).
\]
Now one can employ Young's inequality
$$ab\leq \int_0^a f(t)\,dt+\int_0^bf^{-1}(t)\,dt$$  for an index function $f$ which will be specified later, for
$a=\varphi(\gamma)$ and $b=\alpha$. Using the settings
$$\tilde{f}(s)=\int_0^s f(t)\,dt,\,\,\,\,\,G(\alpha) \ge \int_0^\alpha f^{-1}(t)\,dt,$$
one needs to find the function $f$ such that
$\tilde{f}(\varphi(\gamma))\leq\gamma$.
Then one has
$$ \alpha(1-\beta) \|\xad-\xdag\|^2\leq\delta\|\xad-\xdag\|+G(\alpha),$$
so with $\Psi(\alpha)=\frac{G(\alpha)}{\alpha}$ one gets
$$
\|\xad-\xdag\|^2\leq\frac{1}{1-\beta}\,\left(\frac{\delta}{\alpha}\|\xad-\xdag\|+\Psi(\alpha)\right).
$$
We use again the implication \eqref{quadest} to conclude \eqref{estim1_Lavr}.
{The a priori parameter choice \eqref{alpha_vsc_Lavr} yields the convergence rate \eqref{rate_vsc_Lavr}. For showing the same rate for the a posteriori parameter choice according to 
the Lepski\u\i\ principle, we apply Proposition~1 from \cite{Mat06} (see also \cite[\S~4.2.2]{HofMat12}). Taking into account that $\Psi$ is an increasing function, we have  from  \eqref{estim1_Lavr} that
$$\|\xad-\xdag\|^2 \le \frac{\delta^2+2(1-\beta)\alpha_{apri}^2\Psi(\alpha_{apri})}{(1-\beta)^2\,\alpha^2}= \frac{\delta^2+2(1-\beta)\delta^2}{(1-\beta)^2\,\alpha^2}=(\Sigma(\alpha,\delta))^2,$$
for all $\alpha_0 \le \alpha \le \alpha_{apri}$.
This  ensures the estimate $\|x_{\alpha_{Lep}}^\delta-\xdag\| \le 3\, \Sigma(\alpha_{apri},\delta)=O\left( \frac{\delta}{\Theta^{-1}(\delta^2)}\right)$ (cf. \cite[Proposition~1]{Mat06}).
}\end{proof}

\begin{remark}
{\rm The construction of $f,G$ and $\Psi$ in Theorem~\ref{th:rates_vi} is straightforward if $\varphi$ is smooth enough and we can use $f(t) :=(\varphi^{-1})^\prime(t)$ and $G(\alpha):=\int_0^\alpha f^{-1}(\tau)d\tau$.
Obviously, $\Psi$ in Theorem~\ref{th:rates_vi} plays a similar role as $\psi$ did in Theorem~\ref{th:apriori}.
}\end{remark}

\subsection{The special case of H\"older and logarithmic source conditions}

\paragraph{H\"older-type variational source conditions.} \label{subHoe}

Taking into account exponents $\mu\in(0,\frac12]$, we consider \eqref{lvsc_gen} with
$$\varphi(t)=t^\mu,\,\,\,\,\,f(t)=\frac{1}{\mu}t^\frac{1-\mu}{\mu},\,\,\,\,\,f^{-1}(t)=(\mu t)^\frac{\mu}{1-\mu},\,\,\,\tilde{f}(s)=s^{\frac{1}{\mu}},$$
$$G(\alpha)=(1-\mu)\mu^{\frac{\mu}{1-\mu}}\alpha^\frac{1}{1-\mu},\,\,\,\,\,\Psi(\alpha)=(1-\mu)\mu^{\frac{\mu}{1-\mu}}\alpha^\frac{\mu}{1-\mu},\,\,\,\,\,\Theta(t)\sim t^\frac{2-\mu}{1-\mu},$$
or correspondingly \eqref{vsc} with
$$\psi(t)\sim t^{\frac{\mu}{1-\mu}}.$$
According to Theorems~\ref{th:apriori} and \ref{th:rates_vi} this yields in both cases the H\"older rate
\[
\|x_{\alpha(\delta)}^\delta-\xdag\|=O(\delta^\frac{\mu}{2-\mu}) \qquad
\mbox{if} \quad \alpha(\delta)\sim \delta^\frac{2(1-\mu)}{2-\mu}.
\]

\paragraph{Logarithmic-type variational source conditions.}

We consider \eqref{lvsc_gen} with
$$\varphi(t)=\frac{1}{-\ln t},\,\,\,\,\,f(t)=e^{-\frac{1}{t}}\leq \frac{1}{t^2}e^{-\frac{1}{t}}={\varphi^{-1}}'(t),\,\,\,\,\,f^{-1}(t)=\frac{1}{-\ln t},$$
$$G(\alpha)=\alpha\frac{1}{-\ln \alpha}\geq \int_0^\alpha \frac{1}{-\ln t}\,dt
,\,\,\,\,\,\Theta(t)\sim t^2 \frac{1}{-\ln t},$$
for $\alpha,t\in(0,1)$ or correspondingly \eqref{vsc} with
$$\psi(t)\sim \frac{1}{-\ln t}.$$
According to Theorems~\ref{th:apriori} and \ref{th:rates_vi}, this yields again in both cases the same logarithmic rate $O\left( \sqrt{\Psi(\Theta^{-1}(\delta^2))}\right)$.

Since the a priori choice $\alpha(\delta)\sim\Theta^{-1}(\delta^2)$ can not be determined explicitly in this logarithmic case, a more convenient choice  is $\alpha(\delta)\sim \sqrt{\delta}$ which implies
\[
\|x_{\alpha(\delta)}^\delta-\xdag\|=O\left(\frac{1}{\sqrt{-\ln(\delta)}}\right) \qquad.
\]
For other results concerning logarithmic rates based on variational source conditions we also refer, for example, to \cite{BoHo10,HohWei15,WeHo12}.

%
%
%
%
%
%
%

\begin{remark}
At the moment we cannot present convergence rates for the Lavrentiev regularization under variational source conditions with the variant of a discrepancy principle, for which the convergence was shown in Section~\ref{sec:conv}. Perhaps this is due to an intrinsic deficit of discrepancy principles in Lavrentiev regularization. Thus, {we have considered the balancing principle \req{alpha_vsc_Lavr_Leps} as an alternative a posteriori rule for proving rates, see Theorem \ref{th:rates_vi}.
}
  \end{remark}

\section{Convergence rates under approximate source conditions} \label{sec:rates_approx}
We assume that $F$ locally satisfies an approximate range invariance condition, which means that there exist a bounded monotone (accretive) linear operator $A \in \mathcal{L}(X,X)$ satisfying the condition
\begin{equation} \label{accretive}
\langle Ax,x\rangle \geq 0  \qquad \mbox{for all} \quad x\in X
\end{equation}
and a ball $\mathcal{B}_\rho(\xdag) \subseteq \mathcal{D}(F)$ of positive, but maybe small, radius $\rho>0$ and a uniform constant $0<c<1$, such that for all points $x \in \mathcal{B}_\rho(\xdag)$ there are bounded linear operators $M(x),N(x)\in \mathcal{L}(X,X)$
 satisfying the conditions
\begin{equation} \label{invariance}
\begin{aligned}
&F(x)-F(\xdag)=A M(x)(x-\xdag)+N(x) \quad  \mbox{ and }\\
&\norm{M(x)-I}\leq c<1\,, \quad \norm{N(x)} \leq \sigma(\norm{x-\xdag})
\end{aligned}
\end{equation}
with $\sigma$ to be specified below (cf. \req{sigma}).
If $N$ does not vanish identically, we additionally assume that
\begin{equation}\label{smallness}
\begin{aligned}
&\forall x,\tilde{x}\in \mathcal{B}_\rho(\xdag),\, \norm{M(x)-M(\tilde{x})}\leq L_M\norm{x-\tilde{x}}\mbox{ and } AM(x) \mbox{ accretive }\\
&\norm{\xdag-\bar{x}}\leq (1-c)\rho \,, \quad c+\rho L_M<1
\end{aligned}
\end{equation}
holds.
Note that by the convergence result from Section~\ref{sec:conv}, there exists $\bar{\delta}>0$ such that for all $\delta\in(0,\bar{\delta}]$ the regularized solution $x_{\alpha_*(\delta)}^\delta$ lies in $\mathcal{B}_\rho(\xdag)$.

Accretivity \eqref{accretive} of $A$ has the consequence that (see, e.g., Proposition 2.1 in \cite{Taut02})
\begin{equation} \label{termconsequence}
 \norm{(A+\alpha I)^{-1}}\leq\frac{1}{\alpha} \quad \mbox{and} \quad \norm{(A+\alpha I)^{-1}A}\leq 1 \qquad \mbox{for all} \quad \alpha>0.
\end{equation}
If the monotone operator $F$ is differentiable at $\hat x \in \mathcal{D}(F)$ with Fr\'echet derivative $F^\prime(\hat x)$, then it is well-known that $A:=F^\prime(\hat x)$ is accretive and thus satisfies  conditions \eqref{termconsequence}.

\subsection{General convergence rates}

The benchmark source condition for an accretive linear operator $A \in \mathcal{L}(X,X)$ we consider here is
\begin{equation} \label{exact0}
\xdag-\bar{x}=Aw, \quad \mbox{for some} \;\; w \in X.
\end{equation}
Moreover, we can use for arbitrary $R>0$ an approximate source condition of the form
\begin{equation} \label{apprsc}
\xdag-\bar{x}=Aw_R+r_R\qquad w_R,r_R \in X, \; \|w_R\| = R,\;\|r_R\|=d(R),
\end{equation}
with a distance function
\begin{equation} \label{distfunct}
d(R)=\min\{\norm{\xdag-\bar{x}-Aw} \ : \ \|w\|\leq R\}.
\end{equation}
If the benchmark source condition (\ref{exact0}) fails and hence $\|r_R\|>0$ for all $R>0$ in (\ref{apprsc}),
then the method of approximate source conditions using the distance function \eqref{distfunct} applies.
This method was developed for linear ill-posed operator equations in \cite{Hof06} (see also \cite{DHY07,HDK06} and ideas in \cite{BakuKok04}).
An extension to nonlinear operator equation can be found in \cite{HeinHof09} and moreover in \cite{BoHo10}.
It yields convergence rates
whenever $d(R),\;R>0,$ is continuous, positive, convex and strictly decreasing to zero as $R \to \infty$.
This is the case if
\begin{equation} \label{Astar}
\overline{\mathcal{R}(A)}^X=X.
\end{equation}
(see, e.g.,~\cite[Lemma~3.2]{BoHo10}).
For accretive $A$ this
is equivalent to the injectivity of $A$ due to $\overline{\mathcal{R}(A)}\oplus \mathcal{N}(A)=X$
(cf.~\cite[Prop.~2.1.1 h]{Haase06}).

Under the regularity conditions \req{exact0}, \req{apprsc}, we will use
the a priori parameter choice $\alpha\sim \phi^{-1}(\delta)$ in the sense that
there exists a sufficiently large constant $\tau>1$ such that
\begin{equation}\label{alphadelta}
\begin{cases}
\delta\leq \tau \phi(\alpha) \,,\mbox{ and }
d(\chi^{-1}(\alpha))\leq \tau d(\chi^{-1}(\phi^{-1}(\delta))
\mbox{ if \req{exact0} is violated}\\
\frac{1}{\tau}\delta\leq \alpha^2\leq \tau\delta
\mbox{ if \req{exact0} is satisfied}
\end{cases}
\end{equation}
and the assumption
\begin{equation}\label{sigma}
\sigma(e)\leq C_\sigma e \varepsilon^{-1}(e)
\end{equation}
holds for some constant $C_\sigma>0$ in \req{invariance},
with the auxiliary functions
\begin{equation}\label{phi}
\phi(\alpha)=\alpha d(\chi^{-1}(\alpha))\,,
\end{equation}
\begin{equation}\label{chi}
\chi(R)=\frac{d(R)}{R}\,,
\end{equation}
\begin{equation}\label{varepsilon}
\varepsilon(\alpha)=
\begin{cases}
\left(\frac{2-c}{1-c}+\tau)d(\chi^{-1}(\alpha)\right)
\mbox{ if \req{exact0} is violated}\\
(\frac{\norm{w}}{1-c}+\tau)\alpha
\mbox{ if \req{exact0} is satisfied.}
\end{cases}
\end{equation}
Note that $d$ is monotonically decreasing by definition and $\chi$ is strictly decreasing; if the benchmark source condition \req{exact0} is violated, then $d$ is strictly decreasing and hence $\varepsilon$ is invertible and $\sigma$ is strictly monotonically increasing.
Therewith, we arrive at the following rates result.
\begin{theorem}\label{th:rates}
Let $F:X\to X$ be a hemicontinuous and monotone operator satisfying \req{invariance} with \req{accretive}, \eqref{Astar} and \req{sigma}, and, if $N\not\equiv0$, additionally \req{smallness}. Then with the choice \req{alphadelta} we have
\[
\norm{\xad-\xdag}\,=\,\begin{cases}\;

O(d(\chi^{-1}(\phi^{-1}(\delta))))
\mbox{ if \req{exact0} is violated},\\\;
O(\sqrt{\delta})
\mbox{ if \req{exact0} is satisfied.}
\end{cases}
\]
In particular, if $F$ is G\^{a}teaux differentiable with locally Lipschitz continuous derivative
\begin{equation} \label{Lipschitz}
 \|F^\prime(x)-F^\prime(\xdag)\| \le L \,\|x-\xdag\|  \qquad \mbox{for all} \quad x \in \mathcal{B}_\rho(\xdag)
\end{equation}
and \req{exact0} holds with $A=F^\prime(\xdag)$, then $\norm{\xad-\xdag}=
O(\sqrt{\delta})$.
\end{theorem}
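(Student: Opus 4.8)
The plan is to derive an error decomposition for $\xad-\xdag$ from the defining equation \req{Lavrentiev} and the approximate range invariance \req{invariance}, and then balance the resulting terms against the distance function $d$. First I would subtract $F(\xdag)+\alpha(\xdag-\bar x)$ from both sides of \req{Lavrentiev}, insert the splitting $F(\xad)-F(\xdag)=AM(\xad)(\xad-\xdag)+N(\xad)$ from \req{invariance}, and solve for $\xad-\xdag$, obtaining roughly
\[
\xad-\xdag=(AM(\xad)+\alpha I)^{-1}\bigl(\ydel-y-N(\xad)+\alpha(\bar x-\xdag)\bigr).
\]
Using $\|M(\xad)-I\|\le c<1$ and accretivity of $AM(\xad)$ (guaranteed by \req{smallness} when $N\not\equiv0$, and trivial when $N\equiv0$ so that $M=I$), the analogues of \req{termconsequence} give $\|(AM(\xad)+\alpha I)^{-1}\|\le \frac{1}{(1-c)\alpha}$ and a bound of order $1$ for $(AM(\xad)+\alpha I)^{-1}A$. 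Inserting the source representation $\xdag-\bar x=Aw_R+r_R$ from \req{apprsc}, the term $\alpha(AM(\xad)+\alpha I)^{-1}(\bar x-\xdag)$ splits into an approximation-error part controlled by $\|w_R\|=R$ times $\|(AM(\xad)+\alpha I)^{-1}A\|\lesssim 1$, hence of order $R$, and a part of order $\|r_R\|=d(R)$; the noise part contributes $O(\delta/\alpha)$, and the nonlinearity part $N(\xad)$ contributes $O(\sigma(\|\xad-\xdag\|)/\alpha)$.

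The next step is to absorb the nonlinearity term. Using \req{estx} one has $\|\xad-\xdag\|\le\|\xdag-\bar x\|+\delta/\alpha\le\rho$ for $\delta$ small (this is exactly the statement recorded after \req{smallness}), so $\sigma(\|\xad-\xdag\|)\le\sigma(\varepsilon(\alpha))$ once we verify, via \req{sigma} together with the parameter choice \req{alphadelta} and the monotonicity properties of $d,\chi,\varepsilon$ collected just before the theorem, that $\|\xad-\xdag\|\le\varepsilon(\alpha)$ — this is a self-improving (bootstrap) estimate: one first gets a crude bound on $\|\xad-\xdag\|$, feeds it into $\sigma$, and shows the right-hand side reproduces the bound. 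Here the smallness conditions $c+\rho L_M<1$ and $\|\xdag-\bar x\|\le(1-c)\rho$ in \req{smallness} are what make the fixed-point/bootstrap argument close. At this point one has, in the case \req{exact0} violated,
\[
\norm{\xad-\xdag}\lesssim \frac{\delta}{\alpha}+R+d(R)+\frac{\sigma(\varepsilon(\alpha))}{\alpha},
\]
and optimizing over $R$ by the standard choice $R=\chi^{-1}(\alpha)$ (so that $\alpha R=d(R)$, i.e. $\alpha\chi^{-1}(\alpha)=d(\chi^{-1}(\alpha))$) balances $R$ against $d(R)$ and yields the bound $O\bigl(\frac{\delta}{\alpha}+d(\chi^{-1}(\alpha))\bigr)$; the parameter choice $\alpha\sim\phi^{-1}(\delta)$ with $\phi(\alpha)=\alpha d(\chi^{-1}(\alpha))$ makes $\delta/\alpha\sim d(\chi^{-1}(\alpha))$, giving the claimed rate $O(d(\chi^{-1}(\phi^{-1}(\delta))))$.

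In the benchmark case \req{exact0} — i.e. $d(R)\equiv0$, or rather $R$ may be taken fixed equal to $\|w\|$ — the approximation error is simply $O(\alpha)$ (from $\|\alpha(AM(\xad)+\alpha I)^{-1}Aw\|\le\|w\|\|(AM(\xad)+\alpha I)^{-1}A\|\,\alpha\lesssim\alpha$), the noise error is $O(\delta/\alpha)$, and the choice $\frac1\tau\delta\le\alpha^2\le\tau\delta$ balances them to $O(\sqrt\delta)$, with the nonlinearity term $\sigma(\varepsilon(\alpha))/\alpha\lesssim\varepsilon(\alpha)\varepsilon^{-1}(\varepsilon(\alpha))/\alpha = \varepsilon(\alpha)/\alpha\cdot$ (bounded) absorbed as before since $\varepsilon(\alpha)\sim\alpha$. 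Finally, when $F$ is G\^ateaux differentiable with locally Lipschitz derivative \req{Lipschitz} and $A=F'(\xdag)$, Taylor expansion gives the splitting \req{invariance} with $M(x)=\int_0^1 F'(\xdag)^{-1}\cdots$ — more precisely with $AM(x)=\int_0^1 F'(\xdag+t(x-\xdag))\,dt$, whence $\|AM(x)-A\|\le\frac L2\|x-\xdag\|$ and one can take $N\equiv0$; this reduces the claim to the benchmark case, giving $O(\sqrt\delta)$. The main obstacle I anticipate is the bootstrap step that controls $\sigma(\|\xad-\xdag\|)$: one must carefully track that the constants from \req{smallness} ($c$, $\rho L_M$) leave enough room for the estimate $\|(AM(\xad)+\alpha I)^{-1}\|\le\frac{1}{(1-c)\alpha}$ and the self-consistency $\|\xad-\xdag\|\le\varepsilon(\alpha)$ to hold simultaneously, and that $AM(\xad)$ is genuinely accretive so that \req{termconsequence} transfers to it; the optimization over $R$ and the parameter-choice bookkeeping are then routine given the structural assumptions \req{alphadelta}–\req{varepsilon}.
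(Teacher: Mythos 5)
Your treatment of the case $N\equiv0$ and of the benchmark case is essentially sound and close to the paper's (the paper inverts $(A+\alpha I)$ rather than $(AM(\xad)+\alpha I)$ and absorbs the term $(A+\alpha I)^{-1}A(M(\xad)-I)(\xad-\xdag)$, whose norm is at most $c\norm{\xad-\xdag}$, into the left-hand side; this avoids needing accretivity of $AM(\xad)$, which is only assumed in \req{smallness}, i.e.\ only when $N\not\equiv0$). The genuine gap is in the case $N\not\equiv0$: your decomposition produces the term $\sigma(\norm{\xad-\xdag})/\alpha$, and the bootstrap you propose does not close. The only a priori information available is $\norm{\xad-\xdag}\leq\rho$ from \req{estx}, so \req{sigma} only gives $\sigma(\norm{\xad-\xdag})\leq C_\sigma\,\varepsilon^{-1}(\rho)\norm{\xad-\xdag}$, and the resulting inequality $e\leq K\varepsilon(\alpha)+\tfrac{C_\sigma\varepsilon^{-1}(\rho)}{\alpha}\,e$ is vacuous once $\alpha<C_\sigma\varepsilon^{-1}(\rho)$. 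The smallness conditions $c+\rho L_M<1$ and $\norm{\xdag-\bar x}\leq(1-c)\rho$ that you invoke to close the loop do not help here: in the paper they serve only to make the map $\Phi$ a contractive self-map of $\overline{\mathcal{B}_\rho(\xdag)}$, and they contain no information about $\sigma$. The paper's way out is to introduce an auxiliary element $z$ solving the $N$-free linearized equation \req{z}, $(AM(z)+\alpha I)(z-\xdag)+\alpha(\xdag-\bar x)=0$ (existence by Banach's fixed point theorem under \req{smallness}), to bound $\norm{z-\xdag}\leq\frac{R}{1-c}\alpha+d(R)$ \emph{directly and non-circularly} from this equation and \req{apprsc}, and then to test the Lavrentiev equation with $\xad-z$ and use monotonicity of $F$ to get $\norm{\xad-z}\leq\frac{\delta}{\alpha}+\frac{\sigma(\norm{z-\xdag})}{\alpha}$. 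Since $\sigma$ is now evaluated at the already-controlled quantity $\norm{z-\xdag}\leq\varepsilon(\alpha)$, assumption \req{sigma} yields $\sigma(\varepsilon(\alpha))/\alpha\leq C_\sigma\varepsilon(\alpha)$ with no self-reference. Without this (or some substitute such as a continuation argument in $\alpha$, which you do not supply), your estimate in the $N\not\equiv0$ case is circular.

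A secondary issue concerns the final claim under \req{Lipschitz}. You propose to write $\int_0^1 F'(\xdag+t(x-\xdag))\,dt=AM(x)$ with $\norm{M(x)-I}$ small and $N\equiv0$; but the bound $\norm{AM(x)-A}\leq\tfrac{L}{2}\norm{x-\xdag}$ does not yield $\norm{M(x)-I}\leq c<1$ unless $A$ is boundedly invertible, which fails in the ill-posed setting. The natural way to fit \req{Lipschitz} into the framework is to take $M\equiv I$ and let $N(x)=F(x)-F(\xdag)-A(x-\xdag)$ be the Taylor remainder, so that $\sigma(e)=\tfrac{L}{2}e^2$, which satisfies \req{sigma} with the benchmark choice $\varepsilon(\alpha)\sim\alpha$ from \req{varepsilon}; the statement then follows from the $N\not\equiv0$ analysis (and is in any case attributed in the paper to Tautenhahn's earlier work).
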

\begin{proof}
We first consider the case $N\equiv0$ in \req{invariance}, where the estimates are somewhat simpler.
We rewrite \eqref{Lavrentiev} as
\[\begin{aligned}
&\xad-\xdag\\
=&-(A+\alpha I)^{-1}\left(y-\ydel+F(\xad)-F(\xdag)-A(\xad-\xdag)+\alpha(\xdag-\bar{x})\right)\\
=&-(A+\alpha I)^{-1}(y-\ydel)-(A+\alpha I)^{-1}A(M(\xad)-I)(\xad-\xdag)\\
&-\alpha(A+\alpha I)^{-1}Aw_R+\alpha(A+\alpha I)^{-1}r_R\,,
\end{aligned}
\]
with $w_R=w$ and $r_R=0$ in case of \req{exact0}.
Taking into account the assumption \req{invariance} (with $N\equiv0$) and the inequalities (\ref{termconsequence}), we obtain
the estimate
\[
\norm{\xad-\xdag}\leq \frac{\delta}{\alpha}+c\norm{\xad-\xdag}+ \alpha R+ d(R),
\]
i.e.,
\begin{equation}\label{estxadxda}
\norm{\xad-\xdag}\leq \frac{1}{1-c}
\Bigl(\frac{\delta}{\alpha}+ \alpha R+ d(R)\Bigr),
\end{equation}
Considering the case that \req{exact0} is violated and equilibrating the terms containing $R$ we obtain {the best} $R$ in dependence of $\alpha$:
\begin{equation}\label{Ralpha}
R=
\chi^{-1}(\alpha)\,,
\end{equation}
where $\chi$ is defined as in \req{chi}.
Equilibrating now the two terms in the right hand side of the resulting estimate
\[
\norm{\xad-\xdag}\leq \frac{1}{1-c}\left(\frac{\delta}{\alpha}+2d(\chi^{-1}(\alpha))\right)
\]
yields the {best} a priori choice \req{alphadelta}
(where $\phi$ is defined as in \req{phi})
and thus, the rate estimate
\begin{equation}\label{genestimate}
\norm{\xad-\xdag}=O(d(\chi^{-1}(\phi^{-1}(\delta)))) \qquad \mbox{as} \qquad \delta \to 0\,.
\end{equation}
If \req{exact0} holds, the estimate simplifies to
\[
\norm{\xad-\xdag}
\leq \frac{1}{1-c}\Bigl(\frac{\delta}{\alpha}+\alpha\norm{w}\Bigr)
\leq\frac{1}{1-c}\Bigl(\tau+\norm{w} \Bigr)\alpha
\leq \frac{1}{1-c}\Bigl(\tau+\norm{w} \Bigr)\sqrt{\tau\delta}
\]
where we have used \req{alphadelta}.

If $N$ does not vanish identically, we (as in \cite{Taut02}) introduce an intermediate quantity $z$ solving the equation
\begin{equation}\label{z}
(AM(z)+\alpha I)(z-\xdag)+\alpha(\xdag-\bar{x})=0
\end{equation}
with $A$, $M$ as in \req{invariance}.
Solvability of \req{z} follows from Banach's Fixed Point Theorem, applied to the reformulation
\begin{equation}
z=\Phi(z)=\xdag+(A+\alpha I)^{-1}\Bigl(A(I-M(z))(z-\xdag)+\alpha(\bar{x}-\xdag)\Bigr)
\end{equation}
of \req{z}.
Indeed, $\Phi$ is a self-mapping on $\overline{\mathcal{B}_{\rho}(\xdag)}$, since for any $z\in\overline{\mathcal{B}_{\rho}(\xdag)}$
\[
\begin{aligned}
\norm{\Phi(z)-\xdag}
&=
\norm{(A+\alpha I)^{-1}A(I-M(z))(z-\xdag)+\alpha(A+\alpha I)^{-1}(\bar{x}-\xdag)}\\
&\leq
\norm{(I-M(z)(z-\xdag)}+\norm{\bar{x}-\xdag}\\
&\leq c\rho+\norm{\bar{x}-\xdag}\rho \leq \rho,
\end{aligned}
\]
due to \req{termconsequence} and \req{smallness}.
Contractivity of $\Phi$ follows from the estimate
\[
\begin{aligned}
\norm{\Phi(z)-\Phi(\tilde{z})}
&=
\norm{(A+\alpha I)^{-1}A\Bigl((I-M(z))(z-\xdag)-(I-M(\tilde{z}))(\tilde{z}-\xdag)\Bigr)}\\
&\leq
\norm{(I-M(z))(z-\tilde{z})}+\norm{(M(z)-M(\tilde{z}))(\tilde{z}-\xdag)}\\
&\leq
(c+L_M\rho)\norm{z-\tilde{z}}
\end{aligned}
\]
for any $z,\tilde{z}\in \overline{\mathcal{B}_{\rho}(\xdag)}$, where $c+L_M\rho<1$ by \req{smallness}.

Using \req{z}, we can rewrite \eqref{Lavrentiev} as
\[
F(\xad)-F(z)+y-\ydel+F(z)-F(\xdag)-AM(z)(z-\xdag)+\alpha(\xad-z)=0.
\]
By testing it with $\xad-z$, by monotonicity of $F$ and \req{invariance}, we obtain
\begin{equation}\label{estxz}
\norm{\xad-z}\leq \frac{\delta}{\alpha}+\frac{\sigma(\norm{z-\xdag})}{\alpha}\,.
\end{equation}
Here, by employing  \req{termconsequence} and  the approximate source condition \req{apprsc}
with \\
$\norm{M(z)^{-1}w_R}\leq \frac{R}{1-c}$, $\norm{r_R}\leq d(R)$,
we can estimate as follows,
\[
\begin{aligned}
\norm{z-\xdag}
&=\alpha\norm{(AM(z)+\alpha I)^{-1}(\xdag-\bar{x})}\\
&=\alpha\norm{(AM(z)+\alpha I)^{-1}(AM(z)M(z)^{-1}w_R+r_R)}
\leq \frac{R}{1-c}\alpha + d(R),
\end{aligned}
\]
Using this and the triangle inequality in \req{estxz} we get
\[
\norm{\xad-\xdag}\leq \frac{\delta}{\alpha}+\frac{R}{1-c}\alpha + d(R)
+\frac{\sigma(\frac{R}{1-c}\alpha + d(R))}{\alpha}\,,
\]
since $\sigma$ is monotonically increasing.
By  choosing the interdependence of $R$, $\alpha$, and $\delta$ {in the best way} as above, cf. \req{Ralpha}, \req{alphadelta}, we can achieve that
\[
\frac{\delta}{\alpha}+\frac{R}{1-c}\alpha + d(R)
\leq \varepsilon(\alpha)
\]
with $\varepsilon$ according to \req{varepsilon}.
So in order to maintain the rate from the case $N\equiv0$, we assume the  function $\sigma:\R^+\to\R^+$ to be monotonically increasing and satisfying
\req{sigma}.
This gives the estimate
\[
\norm{\xad-\xdag}\leq \varepsilon(\alpha)
+\frac{\sigma(\varepsilon(\alpha))}{\alpha}
\leq (1+C_\sigma)\varepsilon(\alpha),
\]
which yields   the claimed rates  due to \req{alphadelta}.
\end{proof}

\begin{remark}
{\rm  If \eqref{exact0} fails, it follows due to  \eqref{Astar} that the function $d(R),\;R>0$, given by (\ref{distfunct}) is continuous, positive, convex and strictly decreasing to zero as $R \to \infty$, the rate function $d(\chi^{-1}(\phi^{-1}(\delta))$ is strictly increasing for $\delta>0$
and satisfies the limit conditions $\lim \limits_{\delta \to +0} d(\chi^{-1}(\phi^{-1}(\delta)))=0$ and $\lim \limits_{\delta \to +0} \frac{\sqrt{\delta}}{d(\chi^{-1}(\phi^{-1}(\delta)))}=0$. The latter condition shows that
we have a lower convergence rate if the exact source condition (\ref{exact}) fails.

This approach also works if $d(R)$ stands for a majorant function to the distance function (\ref{distfunct}) which is strictly decreasing to zero as $R \to \infty$.

If $d$ is differentiable, then instead of equilibrating terms one can as well minimize the right hand side of \req{estxadxda} with respect to $\alpha$ and $R$, which leads to the same result at least in the H\"older case considered in Section \ref{subproof}.
}\end{remark}

\subsection{Comparison to previous results}

If the benchmark source condition
\begin{equation} \label{exact}
\xdag-\bar{x}=Aw, \qquad w \in X,\;\|w\|=R_0>0,
\end{equation}
is valid, then (\ref{apprsc}) holds with $w_{R_0}=w$ and $d(R)=0,\;R_0 \le R<\infty$.
In that case we have the benchmark rate
\begin{equation} \label{benchmark}
\norm{\xad-\xdag}=O(\sqrt{\delta})\qquad \mbox{as} \qquad \delta \to 0,
\end{equation}
whenever $\alpha \sim \sqrt{\delta}$. This was already shown in \cite{Taut02} for the Fr\'echet derivative $F^\prime(\xdag)\in \mathcal{L}(X,X) $ as linear operator $A$,
under the weaker nonlinearity condition \req{Lipschitz} that we also use in Theorem \ref{th:rates}. Note that for accretive $A \in \mathcal{L}(X,X)$, condition \req{exact} requires that $\xdag-\bar{x}$ is orthogonal to the nullspace $\mathcal{N}(A)$.

The range invariance condition (\ref{invariance}) with $N\equiv0$ (cf., e.g., \cite{HMP07,Kal98,ScEnKu93,TaJi03}) is really a strong nonlinearity condition, but it has the advantage that $A$ need not be exactly equal to $F^\prime(\xdag)$, but can be chosen from a wider
variety of linear operators. For example, the nonlinearity condition involving constants $k_0,\rho>0$ such that for all $x,\hat{x} \in \mathcal{B}_{\rho}(\xdag) \subset \mathcal{D}(F)$ and $h \in X$ there
exist elements $k(x,\hat{x},h) \in X$ with the property
\begin{equation} \label{rangeturn}
[F^\prime(x)-F^\prime(\hat{x})]\,h=F^\prime(\hat{x})\,k(x,\hat{x},h) \quad \mbox{and} \quad \|k(x,\hat{x},h)\| \le k_0\|x-\hat{x}\|\,\|h\|,
\end{equation}
is frequently used
(see, e.g., \cite[conditions (10), (11)]{ScEnKu93}).
Due to the mean value theorem, this implies  the range invariance condition (\ref{invariance}) with $A=F^\prime(\xdag)$ provided that the smallness condition  $k_0\,\|F^\prime(\xdag)\|<1$ holds. If the following inequality is satisfied for some constants $0<\kappa \le 1$ and $k_1>0$,
$$\|k(x,\xdag,h)\| \le k_1\,\|x-\xdag\|^\kappa\,\|h\|, $$
then the smallness condition is fulfilled whenever $\rho$ is sufficiently small.

{Condition (\ref{rangeturn}) with fixed $\hat{x}:=\xdag$ occurs as Assumption~A3 in \cite{Taut02}. Note that the operators  $A^p,\;0<p \le 1,$ mentioned at \eqref{LavHoelder} are  fractional powers  of the accretive (sectorial) operator $A$, defined by a Dunford integral as
\begin{equation} \label{powerdef}
A^p v:= \frac{\sin(p\pi)}{\pi}\,\int \limits_0^\infty s^{p-1}(A+sI)^{-1}Av\, ds, \qquad v \in X.
\end{equation} This approach was used there to prove H\"older convergence rates
\begin{equation} \label{Hoelderrate}
\norm{\xad-\xdag}=O\left(\delta^\frac{p}{p+1}\right)\qquad \mbox{as} \qquad \delta \to 0,
\end{equation}
with $A:=F^\prime(\xdag)$ for Lavrentiev regularization and with an a priori parameter choice $\alpha \sim \delta^\frac{1}{p+1}$, under the source condition
\begin{equation} \label{fractional}
\xdag-\bar{x}=A^p\,w, \qquad w \in X,
\end{equation}
(see also~\cite[Chapt.~8]{Pruess93} or \cite[Chapt.~1]{Plato95}).} We will obtain this rate result in Section~\ref{subproof} under the range invariance condition (\ref{invariance}) with a completely different proof.

If (\ref{rangeturn}) holds for all center elements $\hat{x} \in \mathcal{B}_{\rho}(\xdag)$ and all $x \in \mathcal{B}_{\rho}(\xdag)$, then one even has a range invariance condition (\ref{invariance}) with $A=F^\prime(\hat{x})$ for all such $\hat{x}$ whenever
 $$\|k(x,\hat{x},h)\| \le k_1\,\|x-\hat{x}\|^\kappa\,\|h\|,\,\,\,\forall h\in X $$
 and $\rho$ is sufficiently small.

Extension of this type of conditions to \eqref{invariance} with $N\not=0$ allows for situations in which the ranges of the linearizations do not coincide exactly. Exact coincidence of the ranges can be quite restrictive if these ranges are non closed (as relevant in ill-posed problems). In the situation of dense but non-closed range, a small correction $N$ might often be sufficient to bridge the gap between ranges of linearizations at different points.

\subsection{More on H\"older convergence rates} \label{subproof}

{
\begin{proposition} \label{powerrate}
Let $F: X \to X$ be a hemicontinuous and monotone operator and suppose that a range invariance condition (\ref{invariance}) holds for some accretive linear operator $A$ satisfying (\ref{Astar}) (with additionally \req{smallness}  if $N\not\equiv0$).
Provided that the benchmark source condition (\ref{exact}) fails for all $R_0>0$, we have a H\"older convergence rate (\ref{Hoelderrate}) for Lavrentiev regularized solutions for an a priori parameter
choice $\alpha \sim \delta^\frac{1}{p+1}$ if there is some source element $w \in X$ such that the fractional power source condition (\ref{fractional}) with $A^p$ from (\ref{powerdef}) is satisfied with   $0<p<1$.
\end{proposition}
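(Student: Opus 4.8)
The plan is to obtain this H\"older rate as a corollary of Theorem~\ref{th:rates}, by quantifying how fast the distance function \req{distfunct} belonging to the fractional source condition \req{fractional} decays. If $\xdag=\bar x$ the claim is trivial, so I assume $\xdag\neq\bar x$; then, since \req{exact} fails for every $R_0>0$, we have $\xdag-\bar x\notin\mathcal{R}(A)$, hence $d(R)>0$ for all $R>0$, and it is the branch of Theorem~\ref{th:rates} corresponding to a violated \req{exact0} that is relevant. By the remark following that theorem it suffices to exhibit a continuous, convex, strictly decreasing majorant $\bar d(R)\ge d(R)$ with $\bar d(R)\to 0$ as $R\to\infty$: the whole mechanism of the theorem — optimal $R=R(\alpha)$ via \req{chi}, optimal $\alpha=\alpha(\delta)$ via \req{phi}, and the resulting rate — then goes through verbatim with $\bar d$ in place of $d$. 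I will use $\bar d(R)=c(p)\,\norm{w}^{1/(1-p)}\,R^{-p/(1-p)}$.

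To estimate $d(R)$ I would work directly from the Dunford representation \req{powerdef} of $\xdag-\bar x=A^pw$, so as to avoid any spectral calculus (which is unavailable since $A$ is merely accretive, not self-adjoint). The bounds \req{termconsequence}, $\norm{(A+sI)^{-1}}\le 1/s$ and $\norm{(A+sI)^{-1}A}\le 1$, show that the integrand in \req{powerdef} has norm $\le s^{p-1}\norm{w}$ near $s=0$ and $\le s^{p-2}\norm{A}\,\norm{w}$ near $s=\infty$, so the integral converges absolutely in $X$ (here $0<p<1$ enters). For $T>0$ set
$$\eta_T:=\frac{\sin(p\pi)}{\pi}\int_T^\infty s^{p-1}(A+sI)^{-1}w\,ds\in X\,.$$
Since $A$ is bounded and commutes with $(A+sI)^{-1}$, one has $A\eta_T=\frac{\sin(p\pi)}{\pi}\int_T^\infty s^{p-1}(A+sI)^{-1}Aw\,ds$, hence $\xdag-\bar x-A\eta_T=\frac{\sin(p\pi)}{\pi}\int_0^T s^{p-1}(A+sI)^{-1}Aw\,ds$. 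Estimating the integrands pointwise by \req{termconsequence} gives $\norm{\eta_T}\le\frac{\sin(p\pi)}{\pi(1-p)}T^{p-1}\norm{w}$ and $\norm{\xdag-\bar x-A\eta_T}\le\frac{\sin(p\pi)}{\pi p}T^{p}\norm{w}$. Choosing $T$ so that the first right-hand side equals $R$ and inserting into the second (the exponents combine as $1+\frac{p}{1-p}=\frac{1}{1-p}$) yields $d(R)\le\norm{\xdag-\bar x-A\eta_T}\le c(p)\,\norm{w}^{1/(1-p)}R^{-p/(1-p)}$, the claimed majorant; it is clearly continuous, convex and strictly decreasing to $0$.

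It remains to feed this $\bar d$ into Theorem~\ref{th:rates}. Since $\bar d$ is a pure power, $\chi(R)\sim R^{-1/(1-p)}$, so $\chi^{-1}(\alpha)\sim\alpha^{p-1}$, $\bar d(\chi^{-1}(\alpha))\sim\alpha^{p}$, $\phi(\alpha)=\alpha\,\bar d(\chi^{-1}(\alpha))\sim\alpha^{p+1}$ and $\phi^{-1}(\delta)\sim\delta^{1/(p+1)}$ — precisely the announced a priori choice $\alpha\sim\delta^{1/(p+1)}$; because every function occurring in \req{alphadelta} is a power, both conditions $\delta\le\tau\phi(\alpha)$ and $d(\chi^{-1}(\alpha))\le\tau\,d(\chi^{-1}(\phi^{-1}(\delta)))$ hold for $\tau$ large enough. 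If $N\not\equiv0$ the hypothesis \req{smallness} is in force, and here $\varepsilon(\alpha)\sim\alpha^{p}$ by \req{varepsilon}, so \req{sigma} reduces to $\sigma(e)=O(e^{(p+1)/p})$; under these conditions Theorem~\ref{th:rates} applies and delivers $\norm{\xad-\xdag}=O\bigl(\bar d(\chi^{-1}(\phi^{-1}(\delta)))\bigr)=O(\delta^{p/(p+1)})$, which is \req{Hoelderrate}.

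The only genuinely technical point is the distance-function estimate in the second paragraph: the absence of a spectral theorem for the accretive operator $A$ forces one to read the fractional power \req{powerdef} off its integral definition and to split and bound it by hand using only \req{termconsequence}; checking absolute convergence of the Bochner integrals and the interchange $A\int=\int A$ (immediate for bounded $A$) is then routine. Everything afterwards is substitution into Theorem~\ref{th:rates} and elementary power bookkeeping; alternatively one could base the key step on the moment inequality $\norm{s(A+sI)^{-1}A^p}\le Cs^p$ for sectorial operators, but the route via \req{powerdef} keeps the argument self-contained.
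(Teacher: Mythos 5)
Your proof is correct, and its core technical step goes by a genuinely different route than the paper's. Both arguments reduce the proposition to showing the decay $d(R)=O\bigl(R^{p/(p-1)}\bigr)$ and then feeding this majorant into Theorem~\ref{th:rates} (the remark after that theorem explicitly licenses replacing $d$ by a strictly decreasing majorant), and your power bookkeeping $\chi(R)\sim R^{1/(p-1)}$, $\phi(\alpha)\sim\alpha^{p+1}$, $\alpha\sim\delta^{1/(p+1)}$ matches the paper's exactly. The difference lies in how the decay of $d$ is obtained. The paper characterizes the minimizer in \req{distfunct} via the Lagrange multiplier / Tikhonov normal equations, $v_\lambda=A^*(AA^*+\lambda I)^{-1}(\xdag-\bar x)$, uses \req{Astar} and the failure of \req{exact} to produce a unique $\lambda_R$ with $\|v_{\lambda_R}\|=R$, and then invokes the moment inequality for sectorial operators (the two bounds in \req{aux}, cited from Haase) to control both $d(R)\le C_1\lambda_R^{p/2}\|w\|$ and $\lambda_R$ itself. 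You instead construct an explicit admissible element by truncating the Dunford integral \req{powerdef} at level $T$, bounding the tail $\eta_T$ and the remainder $\xdag-\bar x-A\eta_T$ using only the elementary resolvent estimates \req{termconsequence}, and then eliminating $T$. Your route is more self-contained and elementary: it avoids the adjoint $A^*$, the moment inequality, and the existence argument for $\lambda_R$ (and hence uses \req{Astar} only through Theorem~\ref{th:rates} itself), at the cost of being tied to the specific integral representation \req{powerdef}. The paper's route is less elementary but plugs directly into the general distance-function machinery of \cite{DHY07} and extends naturally to the adjoint-power variant \req{adjpower} discussed in Remark~\ref{rem:p2}. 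Your numerical details (the exponents $1+\tfrac{p}{1-p}=\tfrac1{1-p}$, the convexity of the majorant, and the reduction of \req{sigma} to $\sigma(e)=O(e^{(p+1)/p})$) all check out.
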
}
\begin{proof}
Under the assumptions of the proposition,  estimate (\ref{genestimate}) is applicable and thus we only have to show that, for all $0<p<1$ and under (\ref{fractional}),
\begin{equation}\label{sim}
d(\chi^{-1}(\phi^{-1}(\delta)))=O(\delta^\frac{p}{p+1}) \qquad \mbox{as} \qquad \delta \to 0.
\end{equation}
In this context, we exploit the formulas
\begin{equation} \label{aux}
\|(AA^*+\lambda I)^{-1}A^p\| \le \frac{C_1}{\lambda^{1-\frac{p}{2}}} \qquad \mbox{and} \qquad \|A^*(AA^*+\lambda I)^{-1}A^p\| \le \frac{C_2}{\lambda^{\frac{1-p}{2}}},
\end{equation}
which are valid for all $\lambda>0$ and constants $C_1,C_2>0$ depending on $p \in (0,1)$. These formulas are a consequence of the moment inequality for sectorial operators $A$ (cf.~\cite[Prop.~6.6.4]{Haase06}).

The minimum problem (\ref{distfunct}) for verifying $d(R)$ can be reformulated by the Lagrange multiplier method, where for any $\lambda>0$ the element $v_\lambda \in X$ is the uniquely determined minimizer to $$\|\xdag-\bar{x}-Av\|^2+\lambda \|v\|^2 \to \min, \quad \mbox{subject to} \quad v \in X,$$
which can be written as
$$v_\lambda=(A^*A+\lambda I)^{-1}A^*(\xdag-\bar{x})=A^*(AA^*+\lambda I)^{-1}(\xdag-\bar{x}).$$
Since by (\ref{Astar}) the range of $A$ is dense in $X$ and since the source condition (\ref{exact}) fails, we have that the strictly decreasing function $\theta(\lambda):=\|v_\lambda\|^2=\|A^*(AA^*+\lambda I)^{-1}(\xdag-\bar{x})\|^2$ satisfies  the limit conditions $$\lim \limits_{\lambda \to 0} \theta(\lambda)=+\infty \qquad \mbox{and} \qquad \lim \limits_{\lambda \to +\infty} \theta(\lambda)=0.$$
Then there is a uniquely determined $\lambda_R>0$ such that $\theta(\lambda_R)=R^2$ and $d(R)=\|\xdag-\bar{x}-Av_{\lambda_R}\|$.
Based on (\ref{fractional}) and on the first inequality in (\ref{aux}), we derive the estimate
$$d(R) \le \|\lambda_R(AA^*+\lambda_R I)^{-1}A^pw\|\le \lambda_R \|(AA^*+\lambda_R I)^{-1}A^p\|\|w\|\le C_1 \lambda_R^{\,\frac{p}{2}}\|w\|.  $$
We claim that the following decay rate can be established
\begin{equation} \label{pp}
d(R) \le K\,R^\frac{p}{p-1}, \qquad 0<R<\infty,
\end{equation}
for some constant $K>0$. Then by using the majorant function  $K\,R^\frac{p}{p-1}$ as $d(R)$ in formula (\ref{genestimate}), one obtains
the required condition (\ref{sim}) with $\chi(R)\sim R^\frac{1}{p-1},\;\phi(\alpha)\sim \alpha^{p+1}$ and under the a priori parameter
choice $\alpha \sim \delta^\frac{1}{p+1}$. Thus, it remains to show (\ref{pp}). By the second inequality in (\ref{aux}) we have $\theta(\lambda)\le C_2^2\|w\|^2 \lambda^{p-1}$ and thus the value
$\lambda_{maj}=\left(\frac{R}{C_2\|w\|}\right)^\frac{2}{p-1}>0$ solving the
equation $C_2\|w\| \lambda_{maj}^{\,\,\frac{p-1}{2}}=R$ satisfies the inequality $\lambda_R \le \lambda_{maj}$. This implies the estimate (\ref{pp}) and completes the proof.
\end{proof}

{
\begin{remark} \label{rem:p2}
{\rm Due to the identity $$\mathcal{R}(A^p)=\mathcal{R}((AA^*)^{p/2})$$ (cf.,~e.g.,~\cite[Lemma~1]{PlMaHo16}), which is valid for all $0<p \le 1$,
the source condition (\ref{fractional}) is equivalent to
\begin{equation} \label{adjpower}
\xdag-\bar{x}=(AA^*)^{p/2}\,w,\qquad w \in X,
\end{equation}
and Proposition~\ref{powerrate} remains true if (\ref{fractional}) is replaced with (\ref{adjpower}). In that version the proof can be found as
a direct consequence of Theorem~3.2 in \cite{DHY07} (extendable also to non-compact operators $A$) by using $A$ instead of $A^*$, while taking into account that $A \in \mathcal{L}(X,X)$. This also yields the decay rate (\ref{pp}) and hence the required H\"older rate result.
}\end{remark}}

\begin{remark} \label{rem:scexvar}
{\rm {In order to provide a relation between the variational source conditions from Subsection~\ref{subsec:rates_varLavr} as well as from Subsection~\ref{subsec:rates_varTikh} and H\"older source conditions of range-type,
we consider the special case of a monotone operator $F:=A \in \mathcal{L}(X,X)$ and we set for simplicity  $\bar{x}:=0$.}

{Firstly, let $A=A^*$ be a self-adjoint operator and let the range-type source condition \eqref{fractional}  hold for some $0<p \le 1$,
which implies the convergence rate \eqref{Hoelderrate}.} Note that only for $0<p \le 1/2$ we have the inequality chain
\begin{eqnarray*}
\langle\xdag,x\rangle &=&\langle w, A^p x\rangle
\leq \norm{w}\,\norm{A^p x}\\
&\leq& \norm{w}\, \norm{x}^{1-2p}\norm{A^\frac12x}^{2p}
= \norm{w}\, \norm{x}^{1-2p}\langle Ax,x\rangle^{p}
\end{eqnarray*}
based on the interpolation inequality. By using Young's inequality \linebreak
$ab\leq \frac{a^\xi}{\xi}+\frac{b^\eta}{\eta}$ with $a=\norm{x}^{1-2p},\, b=\norm{w}\,\langle Ax,x\rangle^{p},\, \xi=\frac{2}{1-2p}$ and $\eta=\frac{2}{1+2p}$
this yields (for all $0<p \le \frac{1}{2}$) the variational source condition
\be{remvi}
\langle\xdag,x\rangle  \le \left( \frac{1}{2}-p \right)\|x\|^2+\left(\frac{1}{2}+p \right)\|w\|^\frac{2}{2p+1}\,\langle Ax,x\rangle^{\frac{2p}{2p+1}}
\ee
of type \eqref{lvsc} occurring in Subsection~\ref{subsec:rates_varLavr} with $\mu=\frac{2p}{2p+1}$ and valid for all $x \in X$.
Evidently, from \req{remvi} we derive in the sense of Subsection~\ref{subHoe} the H\"older convergence rate
\be{eq:rate1}
\|x_{\alpha(\delta)}^\delta-\xdag\|=O(\delta^\frac{\mu}{2-\mu})=O(\delta^\frac{p}{p+1}) \qquad
\mbox{if} \quad \alpha(\delta)\sim \delta^\frac{2(1-\mu)}{2-\mu}= \delta^\frac{1}{p+1},
\ee
which coincides with (\ref{Hoelderrate}) for all $0<p \le 1/2$.

Secondly, consider an accretive $A$ which is not necessarily self-adjoint, and let for some $0<p \le 1$ the source condition
 \be{eq:sc2}
 \xdag=(A^*)^p\, w,\quad w \in X,
 \ee
 hold. According to \cite[Proposition~7.0.1 (e)]{Haase06}, one has  $(A^*)^p=(A^p)^*$ {and} the inequality chain
\begin{eqnarray*}
\langle\xdag,x\rangle &=&\langle (A^*)^p w,  x\rangle
= \langle w, A^p x\rangle\\
&\leq& \norm{w}\,\norm{A^p x}
\le {c}\,\norm{w}\, \norm{x}^{1-p} \|Ax\|^{p},
\end{eqnarray*}
for some $c>0$,
{which is a consequence of the moment resp.~interpolation inequality for monotone operators, see \cite[Corollary 1.1.19]{Plato95} or \cite[Proposition 6.6.4]{Haase06}.
Then} Young's inequality yields, for all $0<p \le 1$, the variational source condition
$$\langle\xdag,x\rangle  \le \left( \frac{1-p}{2} \right)\|x\|^2+\left(\frac{1+p}{2}\right){c^{\frac{2}{p+1}}}\,\|w\|^\frac{2}{p+1}\,\|Ax\|^{\frac{2p}{p+1}}
$$
of type \eqref{vsc} occurring in Subsection~\ref{subsec:rates_varTikh} with $\psi(t)=\frac{2p}{p+1}$ and valid for all $x \in X$.
This gives for $0<p \le 1$
\be{eq:rate2}
\|x_{\alpha(\delta)}^\delta-\xdag\|=O(\delta^\frac{p}{2p+1}) \qquad
\mbox{if} \quad \alpha(\delta)\sim  \delta^\frac{p+1}{2p+1}.
\ee

{From \cite[Theorem~1 and Lemma~1]{PlMaHo16} it can be seen that under the source condition (\ref{eq:sc2}) the obtained rate (\ref{eq:rate2}) is not optimal for $0<p<1/2$ and can be improved to $O(\delta^{p/(p+1)})$ as in (\ref{eq:rate1}), because $\mathcal{R}(A^p)=\mathcal{R}((A^*)^p)$ for $0<p<\frac{1}{2}$.
On the other hand, for $1/2 \le p \le 1$ the ranges $\mathcal{R}(A^p)$ and $\mathcal{R}((A^*)^p)$ are in general different for non-self-adjoint
accretive operators $A$ as the Volterra operator shows (see  Subsection~\ref{ex:sc}). From \cite{PlMaHo16} one can also see
that for $1/2 \le p \le 1$ and under the source condition (\ref{eq:sc2}) the rate (\ref{eq:rate2}) is only optimal in the special case $p=1$. 
}}\end{remark}

\section{Examples}\label{sec:examples}
\subsection{Applying Lavrentiev-type variational source condition to linear problems with Volterra operator}\label{ex:sc}

{This subsection provides an example where a Lavrentiev-type variational source condition does provide the desired convergence rate.
We consider $X=L^2(0,1)$ and  the linear Volterra operator (simple integration operator) $A:X\to X$, defined by
$$[Ax](s):=\int_0^sx(t)\,dt,\quad 0 \le s \le 1,$$
which is accretive (monotone), but not self-adjoint.
For our study let us focus on the specific solution $\xdag\equiv 1$. This violates the source condition $\xdag=A^*v$  required for the rate $\norm{\xad-\xdag}=O\left(\delta^\frac13\right)$  in \cite{LiuNash96}, as for this purpose $\xdag$ would have to vanish at the right boundary point.  Moreover, $\xdag\equiv 1$ does not fulfill  $\xdag=A^{\frac12}v$ which would imply the same estimate along the lines of \cite{Taut02}. In fact,  we have that $\xdag\equiv 1$ satisfies $\xdag\in\mathcal{R}(A^p)=\mathcal{R}((A^*)^p)$ for any  $p\in(0,1/2)$, but we have $\xdag \notin \mathcal{R}(A^p)$
and $\xdag \notin \mathcal{R}((A^*)^p)$ for $p\in[1/2,1]$. This can be derived from the explicit structure of the ranges of the fractional powers of the Volterra operator given in \cite{GLY15}, and one can simply see that the corresponding ranges $\mathcal{R}(A^p)$ and $\mathcal{R}((A^*)^p)$ are different for all $1/2 \le p \le 1$.}
{Indeed, according to \cite[p.234, Prop. 8.5.5]{Haase06} we can rewrite $\xdag=A^pv$ as the Abel-type integral equation
\[
\xdag(s)=(A^p v) (s)=\frac{1}{\Gamma(p)}\int_0^s (s-t)^{p-1} v(t)\, dt\,,
\quad 0 \le s \le 1.
\]
Due to the identity $\Gamma(p)\Gamma(1-p)=\frac{\pi}{\sin(p\pi)}$ the well-known explicit solution formula for the Abel integral equation attains the form
\[
v(t)=\frac{1}{\Gamma(1-p)} \frac{d}{dt}\int_0^t(t-s)^{-p}\xdag(s)\, ds\,,
\quad 0 \le t \le 1,
\]
which yields $v(t)= \frac{1}{\Gamma(1-p)} t^{-p}$ for $\xdag \equiv 1$ so that $v\in L^2(0,1)$ iff $p<\frac12$.
}

However, a variational source condition \eqref{lvsc} resp.~\eqref{lvsc_gen} is satisfied, as shown below.

Let $z(s):=[Ax](s),\;0 \le s \le 1$. Integration by parts yields
\begin{eqnarray*}
\langle Ax,x\rangle=\int_0^1z(t)x(t)\,dt &=&\int_0^1z(t)\,dz(t)\\
&=&(z(1))^2-\int_0^1z(t)\,dz(t)=(z(1))^2-\langle Ax,x\rangle.
\end{eqnarray*}
This implies for all $x \in X$
$$\langle Ax,x\rangle=\frac{1}{2}\left(\int_0^1x(t)\,dt\right)^2$$
and hence we have for $\xdag\equiv 1$ and all $x \in X$
$$\langle\xdag,x\rangle=\int_0^1x(t)\,dt\leq\sqrt{2}\langle Ax,x\rangle^\frac{1}{2},$$
thus \eqref{lvsc} holds with $\beta_1=0,\; \beta_2=\sqrt{2}$  and  $\mu=\frac{1}{2}$. Then Theorem~\ref{th:rates_vi}  implies the convergence rate
$$\|\xad-\xdag\|=O\left(\delta^\frac{1}{3}\right)\,\,\,\,\,\mbox{for}\,\,\,\alpha\sim \delta^\frac{2}{3}.$$
The same result is also a consequence of Theorem \ref{th:rates} using approximate source conditions. Namely,
we have $d(R)\sim\frac{1}{R}$ for the distance function \linebreak $d(R)=\inf\{\norm{\xdag-Aw}: \norm{w}\leq R\}$, which immediately follows from the arguments of \cite[Example~5]{FHM11} by replacing there the function
$\sqrt{2}\cos(i-1/2)\pi t)$ with $\sqrt{2}\sin(i-1/2)\pi t)$ in order to execute the transfer from $A^*$ to $A$ in the eigenfunctions of the singular system.
Then we have from Theorem \ref{th:rates}
$$\|\xad-\xdag\|=O\left(\chi^{-1}(\Phi^{-1}(\delta))\right)=O\left(\delta^\frac{1}{3}\right).$$

\subsection{A parameter identification problem  in an elliptic PDE satisfying the local range invariance condition}

Consider identification of the source term $q$ in the elliptic boundary value problem
\begin{equation}\label{xi-example}
\begin{aligned}
-\Delta u+\xi(u)&=q \mbox{ in }\Omega\\
u&=0 \mbox{ on }\partial\Omega
\end{aligned}
\end{equation}
from measurements of $u$ in $\Omega$, where $\xi:\R\to\R$ is some Lipschitz continuously differentiable monotonically increasing function
and $\Omega \subseteq\R^3$ a smooth domain.
The corresponding forward operator $F:L^2(\Omega)\to H^2(\Omega)\subseteq L^2(\Omega)$, $q\mapsto u$, is indeed monotone, since
\be{Fximon}
\begin{aligned}
\langle F(q)-F(\tilde{q}),q-\tilde{q}\rangle
&=\int_\Omega (u-\tilde{u})(q-\tilde{q})\, dx\\
&=\int_\Omega (u-\tilde{u})\Bigl(-\Delta (u-\tilde{u})+\xi(u)-\xi(\tilde{u})\Bigr)\, dx\\
&=\int_\Omega \Bigl(|\nabla(u-\tilde{u})|^2+(\xi(u)-\xi(\tilde{u}))(u-\tilde{u})\Bigr)\, dx\\
&\geq \norm{\nabla(u-\tilde{u})}_{L^2(\Omega)}^2\geq0
\end{aligned}
\ee
(which by the way still does not imply strong monotonicity of $F$, since this would require a lower bound in terms of $\norm{q-\tilde{q}}_{L^2(\Omega)}^2$).
Lipschitz continuity of $F$ follows from the fact that $w=F(q)-F(q^\dagger)$ can be written as the solution of
\begin{equation}\label{PDEw}
\begin{aligned}
-\Delta w+\tilde{\xi}_q w&=q-q^\dagger \mbox{ in }\Omega\\
w&=0 \mbox{ on }\partial\Omega
\end{aligned}
\end{equation}
where 
\begin{equation}\label{xitil}
\tilde{\xi}_q =\int_0^1 \xi'(u^\dagger+t(F(q)-u^\dagger))\, dt\geq 0\,
\end{equation}
with $u^\dagger=F(q^\dagger)$.
Indeed, by testing with $w$, integration by parts and using Poincar\'{e}-Friedrichs' as well as Cauchy-Schwarz inequalities we get
\[
\begin{aligned}
&\frac{1}{C_{PF}}\|w\|_{L^2(\Omega)}^2\leq\|\nabla w\|_{L^2(\Omega)}^2
\leq \int_\Omega\Bigl( |\nabla w|^2+\tilde{\xi}_q \,w^2\Bigr)\, dx \\
&=\int_\Omega w(q-q^\dagger)\, dx
\leq \|w\|_{L^2(\Omega)} \|q-q^\dagger\|_{L^2(\Omega)} \,.
\end{aligned}
\]
According to Theorem 1.4.6 in \cite{AlbRya06}, the operator $F$ is actually maximal monotone, since it is monotone, continuous  (hence hemicontinuous) and its domain is the space $L^2(\Omega)$.
We define the linear operator $A:L^2(\Omega)\to L^2(\Omega)$ as $h\mapsto Ah=v$ with $v$ solving
\begin{equation}\label{PDEv}
\begin{aligned}
-\Delta v+\xi'(u^\dagger)v&=h \mbox{ in }\Omega\\
v&=0 \mbox{ on }\partial\Omega
\end{aligned}
\end{equation}
where $u^\dagger$ solves \req{xi-example} with $q=q^\dagger$.

We claim that $F$ satisfies the range invariance condition \req{invariance} with $N\equiv0$.  In order to show this,  we use the representation of $w=F(q)-F(q^\dagger)$ \req{PDEw}, \req{xitil} from above, as well as \req{PDEv}, which yields
that $F(q)-F(q^\dagger)=A M(q)(q-q^\dagger)$ holds with
\[
M(q):L^2(\Omega)\to L^2(\Omega)\,, \quad
M(q)=(-\Delta+\xi'(u^\dagger)\mbox{ id})(-\Delta+\tilde{\xi}_q \mbox{ id})^{-1}\,.
\]
Note that for any $a\in L^2(\Omega)$, $a\geq0$ a.e., the linear operator
\[
(-\Delta+a \mbox{ id}):H^2(\Omega)\cap H_0^1(\Omega)\to L^2(\Omega)\,, \quad
u\mapsto -\Delta u + a u
\]
is well-defined and continuously invertible and both $A$ and $AM(q)$ are accretive since for any $a\in L^2(\Omega)$, $a\geq0$ a.e., (so e.g., $a=\xi'(u^\dagger)$ or $a=\tilde{\xi}_q$), $h\in L^2(\Omega)$ and $v:=(-\Delta+a \mbox{ id})^{-1} h$, we have
\[
\langle (-\Delta+a \mbox{ id})^{-1} h, h\rangle =\int_\Omega (|\nabla v|^2+av^2)\, dx\geq0.
\]
The difference of $M(q)$ to the identity can be estimated as follows:
\[
\begin{aligned}
\norm{M(q)-I}&=\norm{
\Bigl((-\Delta+\xi'(u^\dagger)\mbox{ id})-(-\Delta+\tilde{\xi}_q \mbox{ id})\Bigr)
(-\Delta+\tilde{\xi}_q \mbox{ id})^{-1}}\\
&=\norm{
(\xi'(u^\dagger)-\tilde{\xi}_q )\mbox{ id}
(-\Delta+\tilde{\xi}_q \mbox{ id})^{-1}}\\
&=\sup_{0\not=f\in L^2(\Omega)} \frac{\norm{(\xi'(u^\dagger)-\tilde{\xi}_q )
(-\Delta+\tilde{\xi}_q \mbox{ id})^{-1}f}_{L^2}}{\norm{f}_{L^2}}\\
&=\sup_{0\not=w\in H^2(\Omega)\cap H_0^1(\Omega)} \frac{\norm{(\xi'(u^\dagger)-\tilde{\xi}_q )w}_{L^2}}{\norm{-\Delta w+\tilde{\xi}_q  w}_{L^2}}\\
&\leq C_\Delta \sup_{0\not=w\in H^2(\Omega)\cap H_0^1(\Omega)} \frac{\norm{\xi'(u^\dagger)-\tilde{\xi}_q }_{L^2}\norm{w}_{L^2}}{\norm{w}_{H^2}}\\
&\leq C_\Delta C_{H^2\to L^\infty}^\Omega \norm{\xi'(u^\dagger)-\tilde{\xi}_q }_{L^2}\,,
\end{aligned}
\]
where we have used elliptic regularity, i.e., the above mentioned mapping properties of $(-\Delta+a \mbox{ id})$, and continuity of the embedding $H^2(\Omega)\to L^\infty(\Omega)$ with norm $C_{H^2\to L^\infty}^\Omega$.
Here one has
\[
\begin{aligned}
\norm{\xi'(u^\dagger)-\tilde{\xi}_q }_{L^2}^2
&=\int_\Omega \Bigl(\int_0^1\xi'(u^\dagger)- \xi'(u^\dagger+t(u-u^\dagger))\, dt\Bigr)^2\, dx\\
&\leq L_{\xi'}^2\int_\Omega \Bigl(\int_0^1 t(u-u^\dagger)\, dt\Bigr)^2\, dx\\
&\leq L_{\xi'}^2\int_\Omega \Bigl(\int_0^1 t^2\,dt
\int_0^1(u-u^\dagger)^2\, dt\Bigr)\, dx\\
&=\frac{L_{\xi'}^2}{3}\norm{u-u^\dagger}_{L^2}^2
\leq\frac{L_{\xi'}^2L_{PF}^2}{3}\norm{q-q^\dagger}_{L^2}^2,
\,
\end{aligned}
\]
for some constant $ L_{\xi'}>0.$
Consequently, we obtain
\[
\norm{M(q)-I}\leq C_\Delta C_{H^2\to L^\infty}^\Omega
\frac{L_{\xi'}L_{PF}}{\sqrt{3}}\norm{q-q^\dagger}_{L^2}
\leq c
\]
for all $q\in \mathcal{B}_\rho(q^\dagger)$,
where $c=C_\Delta C_{H^2\to L^\infty}^\Omega
\frac{L_{\xi'}L_{PF}}{\sqrt{3}}\rho$ is smaller than one, provided that $\rho$ is sufficiently small.

\medskip

{
We now verify the Lavrentiev specific source condition \req{lvsc_gen} 
for the special case of H\"older rates $\varphi(t)\sim t^\mu$, $\mu\in[0,\frac12]$. More precisely, we will show that 
\[
q^\dagger-\bar{q}\in H_0^{2\mu}(\Omega) 
\]
(which in case $\mu<\frac14$ is equivalent to $q^\dagger-\bar{q}\in H^{2\mu}(\Omega)$, hence does not impose any boundary conditions on $q^\dagger-\bar{q}$)
is sufficient for this variational source condition, provided the domain is sufficiently smooth to admit the elliptic regularity and embedding bounds
\[
\begin{aligned}
&\norm{(-\Delta)^{\mu}}_{H^{2\mu}(\Omega)\to L^2(\Omega)}=:C^\Delta_{2\mu}<\infty\,, \
&&\norm{(-\Delta)^{1-\mu}}_{H^{2-2\mu}(\Omega)\to L^2(\Omega)}=:C^\Delta_{2-2\mu}<\infty\,,\\
&\norm{(-\Delta)^{-1}}_{L^2(\Omega)\to H^2(\Omega)}=:C^\Delta<\infty\,,\ 
&&\norm{\mbox{id}}_{H^{-2\mu}(\Omega)\to L^2(\Omega)}=:C^\Omega_{2\mu}<\infty\,,
\end{aligned}
\]
where $-\Delta$ is the Laplace operator equipped with homogeneous Dirichlet boundary conditions.
For this purpose we use the fact that by \req{Fximon}, for any $q\in\mathcal{M}:=\mathcal{B}_\rho(q^\dagger)$ we have 
\[
\gamma:=\langle F(q)-F(q^\dagger),q-q^\dagger\rangle\geq \norm{\nabla(u-u^\dagger)}_{L^2(\Omega)}^2
\geq \frac{1}{1+C_{PF}} \norm{u-u^\dagger}_{H^1(\Omega)}^2
\]
as well as 
\[
\begin{aligned}
&\norm{\xi(u)-\xi(u^\dagger)}_{H^{-2\mu}(\Omega)}
\leq C^\Omega_{2\mu} \norm{\xi(u)-\xi(u^\dagger)}_{L^2(\Omega)}
= C^\Omega_{2\mu} \sqrt{\int_\Omega (\xi(u)-\xi(u^\dagger))^2\, dx}\\
&\leq C^\Omega_{2\mu} \sqrt{\int_\Omega L_\xi(\xi(u)-\xi(u^\dagger))(u-u^\dagger)\, dx}
\leq C^\Omega_{2\mu} \sqrt{L_\xi} \sqrt{\gamma}\,.
\end{aligned}
\]
The first of these two estimates can be further made use of in the interpolation estimate
\[
\begin{aligned}
&\norm{-\Delta (u^\dagger-u)}_{H^{-2\mu}(\Omega)}
=\sup_{v\in C_0^\infty(\Omega)\setminus\{0\}}\frac{\langle (-\Delta) (u^\dagger-u),v\rangle}{\norm{v}_{H^{2\mu}(\Omega)}}\\
&=\sup_{v\in C_0^\infty(\Omega)\setminus\{0\}}\frac{\langle (-\Delta)^{1-\mu} (u^\dagger-u),(-\Delta)^{\mu}v\rangle}{\norm{v}_{H^{2\mu}(\Omega)}}\\
&\leq C^\Delta_{2\mu} C^\Delta_{2-2\mu} \norm{u^\dagger-u}_{H^{2-2\mu}(\Omega)}\\
&\leq C^\Delta_{2\mu} C^\Delta_{2-2\mu} \norm{u^\dagger-u}_{H^2(\Omega)}^{1-2\mu} \norm{u^\dagger-u}_{H^1(\Omega)}^{2\mu}\\
&\leq C^\Delta_{2\mu} C^\Delta_{2-2\mu} (C^\Delta \norm{-\Delta(u^\dagger-u)}_{L^2(\Omega)})^{1-2\mu} 
((1+C_{PF})\gamma)^{\mu}\,,
\end{aligned}
\]
where we can further estimate 
\[
\norm{-\Delta(u^\dagger-u)}_{L^2(\Omega)}
=\norm{q^\dagger-q -(\xi(u^\dagger)-\xi(u))}_{L^2(\Omega)}
\leq \rho + \sqrt{L_\xi} \sqrt{\gamma}\,.
\]
This altogether yields
\[
\begin{aligned}
&\langle q^\dagger-\bar q,q^\dagger-q\rangle 
= \langle q^\dagger-\bar q, -\Delta (u^\dagger-u) +\xi(u^\dagger)-\xi(u)\rangle\\
&\leq \norm{q^\dagger-\bar{q}}_{H_0^{2\mu}(\Omega)} \Bigl(
\norm{-\Delta (u^\dagger-u)}_{H^{-2\mu}(\Omega)}+\norm{\xi(u)-\xi(u^\dagger)}_{H^{-2\mu}(\Omega)}\Bigr)\\
&\leq \norm{q^\dagger-\bar{q}}_{H_0^{2\mu}(\Omega)} \\
&\qquad\cdot\Bigl(
C^\Delta_{2\mu} C^\Delta_{2-2\mu} (C^\Delta (\rho + \sqrt{L_\xi} \sqrt{\gamma}))^{1-2\mu} 
((1+C_{PF})\gamma)^{\mu}
+C^\Omega_{2\mu} \sqrt{L_\xi} \sqrt{\gamma}
\Bigr)
\\
&=:\varphi(\gamma)=O(\gamma^\mu).
\end{aligned}
\]
}

\section{Conclusions and further work}
In this paper, we have shown convergence rates for Lavrentiev's regularization method under variational and approximate source conditions for linear and nonlinear inverse problems with monotone forward operators. In particular, we have proposed a new variational source condition that seems to be quite appropriate for the Lavrentiev setting.

To compare the capability of the different source conditions, we present in the following table the best possible rates in the linear case.
\\[1ex]
\begin{tabular}{l|l|l}
& condition & rate\\
\hline
(a)& $\xdag=Aw$ for some $w\in X$ & $O(\delta^{\frac12})$\\
(b)& $\xdag=A^*w$ for some $w\in X$ & $O(\delta^{\frac13})$\\
(c)& $\langle\xdag,x\rangle\leq \beta_1\norm{x}^2+\beta_2\langle A x,x\rangle^{\frac{1}{2}}$ for all $x\in X$ & $O(\delta^{\frac13})$
\end{tabular}
\\[1ex]
As the example from Section~\ref{ex:sc} shows, (c) implies neither (a) nor (b).
However, (c) is implied by a fractional source condition $\xdag=A^{\frac12}w$ in case of a self-adjoint operator $A$, cf. Remark~\ref{rem:scexvar}.
{
The question of rates beyond those stated above appears to be a challenging one, which we intend to investigate further on.}

Note that  no additional restriction on the nonlinearity of $F$ is needed as regards  variational source conditions-- not even differentiability -- as already observed in \cite{HKPS07} for Tikhonov regularization. On the other hand, we had to impose some local range invariance condition in order to prove rates with approximate source conditions.

Lavrentiev's method in Banach spaces will be subject of further research in light of the few aspects considered in this Hilbertian setting.

\section*{Acknowledgments}
{
We wish to thank Peter Math\'{e} (WIAS, Berlin), Robert Plato (University of Siegen) and R.~I.~Bo\c{t} (University of Vienna) for  valuable discussions.\\
The research of the first author was partially supported by the German Research Foundation (DFG) under grant HO~1454/8-2.
The second author acknowledges support by the Austrian Science Fund FWF under grand I2271.
The second and third author were supported by the Karl Popper Kolleg ``Modeling-Simulation-Optimization'' funded by the Alpen-Adria-Universit\"at Klagenfurt and by the Carin\-thian Economic Promotion Fund (KWF)
\\
Moreover, we wish to thank both reviewers for fruitful comments leading to an improved version of the manuscript.
}

\end{document}